\newtheorem{remark}[theorem]{Remark}
\newcommand{\C}{\mathbb{C}}
\newcommand{\na}{\nabla}
\newcommand{\pa}{\partial}
\newcommand{\eps}{\varepsilon}
\newcommand{\om}{\omega}
\newcommand{\Om}{\Omega}
\newcommand{\De}{\Delta}
\newcommand{\si}{\sigma}
\newcommand{\IOm}{I\times \Om}
\newcommand{\ImOm}{I_m\times \Om}
\newcommand{\Id}{\operatorname{Id}}
\newcommand{\vertiii}[1]{{\left\vert\kern-0.25ex\left\vert\kern-0.25ex\left\vert #1
    \right\vert\kern-0.25ex\right\vert\kern-0.25ex\right\vert}}
\newcommand{\norm}[1]{\lVert#1\rVert}
\newcommand{\abs}[1]{\lvert#1\rvert}
\newcommand{\Ppol}[1]{\mathcal{P}_{#1}}
\newcommand{\half}{\frac{1}{2}}
\newcommand{\thalf}{\frac{3}{2}}
\newcommand{\R}{\mathbb{R}}
\newcommand{\lh}{\abs{\ln{h}}}
\newcommand{\lk}{\ln{\frac{T}{k}}}
\newcommand{\Xkh}{X^{q,r}_{k,h}}
\definecolor{darkred}{rgb}{.7,0,0}
\definecolor{green}{rgb}{0,0.7,0}
\begin{document}

\title{Pointwise best approximation results for Galerkin finite element solutions of parabolic problems}

\author{
Dmitriy Leykekhman\footnotemark[2]
\and
Boris Vexler\footnotemark[3]
}

\pagestyle{myheadings}
\markboth{DMITRIY LEYKEKHMAN AND BORIS VEXLER}{Parabolic pointwise best approximation}

\maketitle

\renewcommand{\thefootnote}{\fnsymbol{footnote}}
\footnotetext[2]{Department of Mathematics,
               University of Connecticut,
              Storrs,
              CT~06269, USA (dmitriy.leykekhman@uconn.edu). The author was partially supported by NSF grant DMS-1522555. }

\footnotetext[3]{Lehrstuhl f\"ur Mathematische Optimierung, Technische Universit\"at M\"unchen,
Fakult\"at f\"ur Mathematik, Boltzmannstra{\ss}e 3, 85748 Garching b. M\"unchen, Germany
(vexler@ma.tum.de). }

\renewcommand{\thefootnote}{\arabic{footnote}}


\begin{abstract}
In this paper we establish a best approximation property of fully discrete  Galerkin finite element solutions of second order parabolic problems on convex polygonal and polyhedral domains in the $L^\infty$ norm. The discretization method uses of continuous Lagrange finite elements in space and discontinuous Galerkin methods in time of an arbitrary order. The method of  proof differs from the established fully discrete error estimate techniques and for the first time allows to obtain such results in three space dimensions. It uses elliptic results, discrete resolvent estimates in weighted norms, and the discrete maximal parabolic regularity for discontinuous Galerkin methods established by the authors in \cite{LeykekhmanD_VexlerB_2015b}. In addition, the proof does not require any relationship between spatial mesh sizes  and time steps. We also establish an interior best approximation property that shows a more local behavior of the error at a given point.
\end{abstract}

\begin{keywords}
parabolic problems, finite elements, discontinuous Galerkin, a priori error estimates, pointwise error estimates
\end{keywords}

\begin{AMS}
\end{AMS}
\section{Introduction}
Let $\Om$ be a convex polygonal/polyhedral domains in $\mathbb{R}^N$, $N=2,3$ and $I=(0,T)$. We consider the second order parabolic problem
\begin{equation}\label{eq: heat equation}
\begin{aligned}
\pa_t u(t,x)-\Delta u(t,x) &= f(t,x), & (t,x) &\in \IOm,\;  \\
    u(t,x) &= 0,    & (t,x) &\in I\times\pa\Omega, \\
   u(0,x) &= u_0(x),    & x &\in \Omega.
\end{aligned}
\end{equation}
For the purpose of this paper we assume that $f$ and $u_0$ are such that the unique solution $u$ of~\eqref{eq: heat equation} fulfills $u\in C(\bar I\times \bar \Omega)\cap C(\bar I; H^1_0(\Omega))$. To achieve this, we can for example assume that the right-hand side $f \in L^r(I\times \Omega)$ with $r>\frac{N}{2}+1$ and $u_0\in C(\bar \Om)\cap H^1_0(\Omega)$, cf., e.\,g.,~\cite[Lemma 7.12]{Troeltzsch:2010}, but other assumptions are possible.

To discretize the problem we use  continuous Lagrange finite elements in space and discontinuous Galerkin methods in time. The precise description of the method is given in Section \ref{sec: discretization}.
Our main goal in this  paper is to establish global and interior space-time pointwise best approximation type results for the fully discrete error, namely,
\begin{equation}\label{eq: best approximation property}
\|u-u_{kh}\|_{L^\infty(I\times \Om)}\le C\lh\lk\|u-\chi\|_{L^\infty(I\times \Om)},
\end{equation}
where $u_{kh}$ denotes the fully discrete solution and $\chi$ is an arbitrary element of the finite dimensional space, $h$ is the  spatial mesh parameter  and $k$ stands for the maximal time step.  Such results have only natural assumptions on the problem data and
are desirable in many applications, for example in optimal control problems governed by parabolic equations.

Most of the work on pointwise error estimates for parabolic problems were devoted to establishing optimal convergence rates for the error between the exact solution $u(t)$ and the semidiscrete solution $u_h(t)$ that is
continuous in time,
\cite{BrambleJH_SchatzAH_ThomeeV_WahlbinLB_1977,ChenH_1993, DobrowolskiM_1980b, DobrowolskiM_1980a, LiB_2015a, LiB_SunW_2015a, NitscheJA_1979, NitscheJA_WheelerMF_1981, SammonP_1982, AHSchatz_VThomee_LBWahlbin_1998a, AHSchatz_VThomee_LBWahlbin_1980a, ThomeeV_WahlbinLB_2000a}. The best approximation results for the semidiscrete error
$u(t)-u_{h}(t)$ in $L^\infty(I\times \Om)$ norm can be found, for example, in \cite{LeykekhmanD_2004b, AHSchatz_VThomee_LBWahlbin_1998a}.

Results on fully discrete pointwise error estimates are much less abundant. Currently, there are several techniques available for obtaining fully discrete error estimates.
One popular technique splits the fully discrete error into two parts as  $u-u_{kh}=(u-u_{h})+(u_h-u_{kh})$. The first part of the error is estimated by the semidiscrete error estimates and the second part of the error is treated by using results from rational approximation of  analytic semigroups in Banach spaces.
Thus, for example, optimal convergence rates for backward Euler and Crank-Nicolson methods were obtained in \cite{AHSchatz_VThomee_LBWahlbin_1980a} (see also \cite[Sec. 9]{ThomeeV_2006} for treatment of general Pad\'{e} schemes).
A similar technique uses a different  splitting, $u-u_{kh}=(u-R_hu)+(R_hu-u_{kh})$, where $R_h$ is the Ritz projection. In this approach the first part of the error is treated by elliptic results and the second part of the error satisfies a certain parabolic equation with the right-hand side involving $(u-R_hu)$, which again can be treated by results from rational approximation of analytic semigroups in Banach spaces \cite{LeykekhmanD_WahlbinLB_2008a} (see also \cite[Thm. 8.6]{ThomeeV_2006}).
For smooth solutions, both  approaches above produce error estimates with optimal convergence rates. However, in many applications these two techniques require unreasonable assumptions on the data, as well as on the regularity of the  solution. As a result, the best approximation property~\eqref{eq: best approximation property} can not be derived,  except for the one-dimensional case \cite{WahlbinLB_1981a}.

Another approach, that is more direct, is based on the weighted technique. For $N=2$ and low order time schemes, this technique works rather well and allows one to obtain sharp results. Thus,  in \cite{ErikssonK_JohnsonC_1995a} (see also \cite[Thm. 4.1]{NochettoRH_VerdiC_1997a}) optimal convergence error estimates of the form
$$
\|u(t_n)-u_{kh}(t_n)\|_{L^\infty(\Om)}\le C\lh\left(\ln\frac{t_n}{k}\right)^{\frac{1}{2}}\max_{1\le m\le n}\left(k^q\|\pa^q_t u\|_{L^\infty((0,t_m)\times\Om)}+h^2\|D^2 u\|_{L^\infty((0,t_m)\times\Om)}\right),
$$
for piecewise constant and piecewise linear time discretizations, i.e. $q=1$ and $q=2$, correspondingly, were derived on convex polygonal domains (the result in  \cite{ErikssonK_JohnsonC_1995a} actually holds even on mildly graded meshes). The best approximation property of the form \eqref{eq: best approximation property} was derived in \cite{RRannacher_1990a} on convex polygonal domains  without any unnatural smoothness requirements. However, for $N=3$, the weighted technique is much more cumbersome and as of today, there is no three dimensional pointwise best approximation results or optimal error estimates even for backward Euler method.

In this paper  for the time discretization we consider discontinuous Galerkin (dG) methods of an arbitrary order. These methods were introduced to parabolic problems  in \cite{JametP_1978} and deeply analyzed in \cite{ErikssonK_JohnsonC_ThomeeV_1985}.
There are a number of important properties that make dG schemes attractive for temporal discretization of parabolic equations. For example,  such schemes allow for a priori error estimates of optimal order with respect to discretization parameters, such as the size of time steps, as well as with respect to the regularity requirements for the solution \cite{ErikssonK_JohnsonC_1991a,ErikssonK_JohnsonC_1995a}. Different systematic approaches for a posteriori error estimation and adaptivity developed for finite element discretizations can be adapted for dG temporal discretization of parabolic equations, see, e.\,g.,~\cite{SchmichM_VexlerB_2008, SchotzauD_WihlerTP_2010}. Since the trial space allows for discontinuities at the time nodes, the use of different spatial discretizations for each time step can be directly incorporated into the discrete formulation, see, e.\,g.,~\cite{SchmichM_VexlerB_2008}. Compared to the continuous Galerkin methods, dG schemes are not only A-stable but also strongly A-stable \cite{LasaintP_RaviartPA_1974}. An efficient and easy to implement approach that avoids complex coefficients, which arise in the equations obtained by a direct decoupling for high order dG schemes, was developed in~\cite{RichterT_SpringerA_VexlerB_2013}.

Our approach in establishing \eqref{eq: best approximation property} for dG methods is more in the spirit of the work of Palencia \cite{PalenciaC_1996a} and does not require semidiscrete error estimates or even any error splitting. Moreover, it does not require any relationship between the spatial mesh size $h$ and the maximal time step $k$, which is essential for problems on graded meshes.

Our approach is based on two main tools: The newly established discrete maximal parabolic regularity results~\cite{LeykekhmanD_VexlerB_2015b} for discontinuous Galerkin time schemes and discrete resolvent estimates of the following form:
\begin{equation}\label{eq: resolven estimate intro}
\|(z+\Delta_h)^{-1}\chi\|_{L^\infty(\Om)}\le  \frac{C}{|z|}\|\chi\|_{L^\infty(\Om)},\quad\text{for}\ z\in \mathbb{C}\setminus \Sigma_{\gamma},\quad \text{for all}\ \chi\in \mathbb{V}_h=V_h+iV_h,
\end{equation}
where $V_h$ is the space of continuous Lagrange finite elements and
\begin{equation}\label{eq: definition of sigma intro}
\Sigma_\gamma= \Set{z \in \mathbb{C} | \abs{\arg{(z)}} \le \gamma},
\end{equation}
for some $\gamma\in (0,\frac{\pi}{2})$ and the constant $C$ that may contain $\lh$ but must be independent of $h$ otherwise.
Such a discrete resolvent estimate can be shown directly \cite{BakaevNY_CrouzeixM_ThomeeV_2006a, BakaevNY_ThomeeV_WahlbinLB_2003a, LeykekhmanD_VexlerB_2015c} or by showing stability  and  smoothing results of the semidiscrete solution operator $E_h(t)=e^{-\Delta_ht}$ \cite{LiB_2015a, AHSchatz_VThomee_LBWahlbin_1998a}. The first approach is preferable since it establishes \eqref{eq: resolven estimate intro} for an arbitrary $\gamma\in (0,\frac{\pi}{2})$, while the second approach via theorem of Hille (see, e.g., Pazy \cite{PazyA_1983}, Thm. 2.5.2) only guarantees existence of some  $\gamma\in (0,\frac{\pi}{2})$.

In this paper we also establish a local version of the best approximation result \eqref{eq: best approximation property}. This  result (cf. Theorem \ref{thm:local_best_approx}) shows more local behavior of the error at a fixed point. For elliptic problems such estimates are well known  (cf. \cite{AHSchatz_LBWahlbin_1977a, AHSchatz_LBWahlbin_1995a, WahlbinLB_1991a}), but for parabolic problems the only result we are aware of is in \cite{RRannacher_1990a}, which is stated for convex polygonal domains without a proof and \cite{LeykekhmanD_VexlerB_2013a, LeykekhmanD_VexlerB_2015a} that are global in time. To obtain this result, in addition to the stability of the Ritz projection  in $L^\infty(\Om)$ norm and the resolvent estimate \eqref{eq: resolven estimate intro}, we need the following weighted resolvent estimate
\begin{equation}\label{eq: weighted resolven estimate intro}
\|\sigma^{\frac{N}{2}}(z+\Delta_h)^{-1}\chi\|_{L^2(\Om)}\le  \frac{C\lh}{|z|}\|\sigma^{\frac{N}{2}}\chi\|_{L^2(\Om)},\quad\text{for}\ z\in \mathbb{C}\setminus \Sigma_{\gamma},\quad \text{for all}\ \chi\in \mathbb{V}_h,
\end{equation}
with $\sigma(x)=\sqrt{|x-x_0|^2+K^2h^2}$. This estimate is established in Theorem \ref{thm:weightedResolvent}. The estimate \eqref{eq: weighted resolven estimate intro} is somewhat stronger than the corresponding resolvent estimate in $L^\infty$ norm, meaning that  \eqref{eq: resolven estimate intro} follows rather easily from \eqref{eq: weighted resolven estimate intro} (modulo logarithmic term $\lh$), but not vice versa.

The rest of the paper is organized as follows. In the next section we describe the discretization method and state our main results. In Section \ref{sec:elliptic}, we review some essential elliptic results in weighted norms. Section \ref{sec: weighted resolvent} is devoted to establishing resolvent estimate in weighted norms. In Section \ref{sec: max and smoothing}, we review some results from discrete maximal parabolic regularity. Finally, in Sections \ref{sec: proofs global results} and \ref{sec: proofs local results}, we give proofs of global and interior best approximation properties of the fully discrete solution.


\section{Discretization and statement of main results}\label{sec: discretization}
To introduce the time discontinuous Galerkin discretization for the problem,
 we partition  the interval $(0,T]$ into subintervals $I_m = (t_{m-1}, t_m]$ of length $k_m = t_m-t_{m-1}$, where $0 = t_0 < t_1 <\cdots < t_{M-1} < t_M =T$. The maximal and minimal time steps are denoted by $k =\max_{m} k_m$ and $k_{\min}=\min_{m} k_m$, respectively.
We impose the following conditions on the time mesh (as in ~\cite{LeykekhmanD_VexlerB_2015b} or ~\cite{DMeidner_RRannacher_BVexler_2011a}):
\begin{enumerate}[(i)]
  \item There are constants $c,\beta>0$ independent of $k$ such that
    \[
      k_{\min}\ge ck^\beta.
    \]
  \item There is a constant $\kappa>0$ independent of $k$ such that for all $m=1,2,\dots,M-1$
    \[
    \kappa^{-1}\le\frac{k_m}{k_{m+1}}\le \kappa.
    \]
  \item It holds $k\le\frac{1}{4}T$.
\end{enumerate}
The semidiscrete space $X_k^q$ of piecewise polynomial functions in time is defined by
\[
X_k^q=\Set{u_{k}\in L^2(I;H^1_0(\Om)) | u_{k}|_{I_m}\in \Ppol{q}(H^1_0(\Om)), \ m=1,2,\dots,M},
\]
where $\Ppol{q}(V)$ is the space of polynomial functions of degree $q$ in time with values in a Banach space $V$.
We will employ the following notation for functions in $X_k^q$
\begin{equation}\label{def: time jumps}
u^+_m=\lim_{\eps\to 0^+}u(t_m+\eps), \quad u^-_m=\lim_{\eps\to 0^+}u(t_m-\eps), \quad [u]_m=u^+_m-u^-_m.
\end{equation}
Next we define the following bilinear form
\begin{equation}\label{eq: bilinear form B}
 B(u,\varphi)=\sum_{m=1}^M \langle \pa_t u,\varphi \rangle_{I_m \times \Omega} + (\na u,\na \varphi)_{\IOm}+\sum_{m=2}^M([u]_{m-1},\varphi_{m-1}^+)_\Om+(u_{0}^+,\varphi_{0}^+)_\Om,
\end{equation}
where $( \cdot,\cdot )_{\Omega}$ and $( \cdot,\cdot )_{I_m \times \Omega}$ are the usual $L^2$ space and space-time inner-products,
$\langle \cdot,\cdot \rangle_{I_m \times \Omega}$ is the duality product between $ L^2(I_m;H^{-1}(\Omega))$ and $ L^2(I_m;H^{1}_0(\Omega))$. We note, that the first sum vanishes for $u \in X^0_k$. The dG($q$) semidiscrete (in time) approximation $u_k\in X_k^q$ of \eqref{eq: heat equation} is defined as
\begin{equation}\label{eq: semidiscrete heat with RHS}
B(u_k,\varphi_k)=(f,\varphi_k)_{\IOm}+(u_0,\varphi_{k,0}^+)_\Om \quad \text{for all }\; \varphi_k\in X_k^q.
\end{equation}
Rearranging the terms in \eqref{eq: bilinear form B}, we obtain an equivalent (dual) expression of $B$:
\begin{equation}\label{eq:B_dual}
 B(u,\varphi)= - \sum_{m=1}^M \langle u,\pa_t \varphi \rangle_{I_m \times \Omega} + (\na u,\na \varphi)_{\IOm}-\sum_{m=1}^{M-1} (u_m^-,[\varphi]_m)_\Om + (u_M^-,\varphi_M^-)_\Om.
\end{equation}

 Next we define the fully discrete approximation. For $h \in (0, h_0]$; $h_0 > 0$, let $\mathcal{T}$  denote  a quasi-uniform triangulation of $\Om$  with mesh size $h$, i.e., $\mathcal{T} = \{\tau\}$ is a partition of $\Om$ into cells (triangles or tetrahedrons) $\tau$ of diameter $h_\tau$ such that for $h=\max_{\tau} h_\tau$,
$$
\operatorname{diam}(\tau)\le h \le C |\tau|^{\frac{1}{N}}, \quad \forall \tau\in \mathcal{T}.
$$
 Let $V_h$ be the set of all functions in $H^1_0(\Om)$ that are polynomials of degree $r\in \mathbb{N}$ on each $\tau$, i.e. $V_h$ is the usual space of conforming finite elements.
To obtain the fully discrete approximation we consider the space-time finite element space
\begin{equation} \label{def: space_time}
\Xkh=\Set{v_{kh} \in L^2(I;V_h)| v_{kh}|_{I_m}\in \Ppol{q}(V_h), \ m=1,2,\dots,M}, \quad q\geq 0,\quad r\geq 1.
\end{equation}
We define a fully discrete $cG($r$)dG($q$)$ solution $u_{kh} \in \Xkh$ by
\begin{equation}\label{eq:fully discrete heat with RHS}
B(u_{kh},\varphi_{kh})=(f,\varphi_{kh})_{\IOm}+(u_0,\varphi_{kh}^+)_\Om \quad \text{for all }\; \varphi_{kh}\in \Xkh.
\end{equation}

\subsection{Main results}

Now we state our main results.

\subsubsection{Global pointwise best approximation error estimates}

The first result shows best approximation property of $cG($r$)dG($q$)$ Galerkin solution in $L^\infty(I\times \Om)$ norm. For $N=2$ and $q=0$, $r=1$, the result can be found in \cite{RRannacher_1990a} for convex polygonal domains. A similar result showing optimal error estimate is established in \cite{ErikssonK_JohnsonC_1995a}, Thm.~1.2. We are not aware of any pointwise best approximation type results for $N=3$.

\begin{theorem}[Global best approximation]\label{thm:global_best_approx}
Let $u$ and $u_{kh}$ satisfy \eqref{eq: heat equation} and \eqref{eq:fully discrete heat with RHS}
respectively. Then, there exists a constant $C$ independent of $k$ and  $h$ such that
\[
\norm{u-u_{kh}}_{L^\infty(I\times \Om)} \le C \lk \lh \inf_{\chi \in \Xkh} \norm{u-\chi}_{L^\infty(I\times \Om)}.
\]
\end{theorem}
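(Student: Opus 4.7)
My plan is to prove the slightly stronger statement that for every $\chi\in\Xkh$ one has
\[
\|u-u_{kh}\|_{L^\infty(\IOm)} \le \|u-\chi\|_{L^\infty(\IOm)} + C\lh\lk\|u-\chi\|_{L^\infty(\IOm)},
\]
and then take the infimum over $\chi$. The first term is just the triangle inequality; the second bounds the discrete part $e_{kh}:=u_{kh}-\chi\in\Xkh$ by a duality argument. I would fix $(t^*,x^*)\in\bar I\times\bar\Om$ at which $|e_{kh}|$ attains its maximum, introduce a (signed) regularized discrete delta $\tilde\delta\in\Xkh$ with the reproducing property $(\tilde\delta,\varphi_{kh})_{\IOm}=\operatorname{sign}\bigl(e_{kh}(t^*,x^*)\bigr)\varphi_{kh}(t^*,x^*)$ for all $\varphi_{kh}\in\Xkh$ and bounded mass $\|\tilde\delta\|_{L^1(I;L^1(\Om))}\le C$ (built as a product of the standard spatial regularized delta from elliptic $L^\infty$ analysis and a polynomial in time localized to the slab $I_{m^*}$ containing $t^*$). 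Let $g_{kh}\in\Xkh$ solve the dual dG problem $B(\varphi_{kh},g_{kh})=(\tilde\delta,\varphi_{kh})_{\IOm}$ for all $\varphi_{kh}\in\Xkh$, which, viewed via~\eqref{eq:B_dual}, is a backward-in-time discrete equation with natural terminal condition $g_{kh,M}^-=0$. By Galerkin orthogonality,
\[
\|e_{kh}\|_{L^\infty(\IOm)} = (\tilde\delta,e_{kh})_{\IOm} = B(e_{kh},g_{kh}) = B(\chi-u,g_{kh}).
\]

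Next I would expand $B(\chi-u,g_{kh})$ using~\eqref{eq:B_dual}. The delicate spatial term $(\na(\chi-u),\na g_{kh})_{\IOm}$ is reshaped with the Ritz projection $R_h$: since $\chi(t)\in V_h$ forces $R_h\chi=\chi$ pointwise in time, and the Galerkin property of $R_h$ gives $(\na(u-R_h u),\na g_{kh})_{\IOm}=0$, one arrives at
\[
(\na(\chi-u),\na g_{kh})_{\IOm} = -\bigl(R_h(\chi-u),\Delta_h g_{kh}\bigr)_{\IOm},
\]
where $\Delta_h$ is the discrete Laplacian. The $L^\infty$-stability of $R_h$ on convex polygonal/polyhedral domains (this is the source of the $\lh$ factor) then controls this by $C\lh\|u-\chi\|_{L^\infty(\IOm)}\|\Delta_h g_{kh}\|_{L^1(I;L^1(\Om))}$. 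The remaining terms in~\eqref{eq:B_dual}, namely the time-derivative piece, the interior jumps and the terminal term, are each estimated by $\|u-\chi\|_{L^\infty(\IOm)}$ times $\|\pa_t g_{kh}\|_{L^1(L^1)}$, $\sum_m\|[g_{kh}]_m\|_{L^1(\Om)}$, and $\|g_{kh,M}^-\|_{L^1(\Om)}=0$ respectively.

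It then remains to control the discrete norms of $g_{kh}$ by $\|\tilde\delta\|_{L^1(L^1)}\le C$, and this is precisely the endpoint discrete maximal parabolic regularity of the adjoint dG scheme in the $L^1$-$L^1$ setting, obtained by duality from the $L^\infty$-$L^\infty$ version established in~\cite{LeykekhmanD_VexlerB_2015b}, which itself rests on the resolvent estimate~\eqref{eq: resolven estimate intro}. This supplies the unavoidable $\lk$ factor and yields
\[
\|\pa_t g_{kh}\|_{L^1(L^1)}+\|\Delta_h g_{kh}\|_{L^1(L^1)}+\sum_m\|[g_{kh}]_m\|_{L^1(\Om)} \le C\lk.
\]
Combining everything gives $\|e_{kh}\|_{L^\infty(\IOm)}\le C\lh\lk\|u-\chi\|_{L^\infty(\IOm)}$, and taking the infimum over $\chi\in\Xkh$ finishes the proof. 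I expect the hardest ingredient to be precisely the endpoint $L^1$-$L^1$ discrete maximal parabolic regularity: energy-type arguments cannot deliver it, and one must invoke the full strength of the resolvent machinery; a secondary technical point is the construction of the discrete delta $\tilde\delta$ with simultaneous reproduction on $\Xkh$ and bounded $L^1(L^1)$ mass, which must be checked uniformly with respect to the location of $(t^*,x^*)$, including when $t^*$ lies near a time node.
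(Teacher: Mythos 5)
Your proposal follows essentially the same route as the paper's proof: a duality argument with a regularized space--time Green's function, Galerkin orthogonality, the $L^\infty$-stability of the Ritz projection to reshape the gradient term through $\Delta_h g_{kh}$ (the source of $\lh$), and discrete maximal parabolic regularity for the dual dG solution in the $\norm{\cdot}_{L^1(\Om)}$ norm (the source of $\lk$), followed by the reduction to $u-\chi$ using the invariance of the scheme on $\Xkh$. The paper organizes this in two cases --- $\tilde t=T$ via a terminal-value dual problem together with the homogeneous smoothing estimate of Lemma~\ref{lemma: smoothing L1 in time}, and $t_{M-1}<\tilde t<T$ via the right-hand-side delta $\tilde\delta_{x_0}\tilde\theta$ together with Lemma~\ref{lemma: fully discrete_maximal_parabolic} --- but your unified treatment with a single space--time delta works, since the temporal reproducing kernel on $\Ppol{q}(I_{m^*})$ has $L^1(I_{m^*})$ norm bounded uniformly in the location of $t^*$, including at the nodes. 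Two small repairs: first, $g_{kh,M}^-$ does \emph{not} vanish even though the continuous dual solution has zero terminal data, so the terminal term $((u-\chi)_M^-,g_{kh,M}^-)_\Om$ in~\eqref{eq:B_dual} cannot simply be discarded; the paper absorbs it into the jump sum by setting $[g_{kh}]_M=-g_{kh,M}^-$, after which it is controlled by the very maximal-regularity bound you already invoke for the interior jumps. Second, there is no need to obtain the $L^1$--$L^1$ estimate by dualizing an $L^\infty$--$L^\infty$ one: Lemma~\ref{lemma: fully discrete_maximal_parabolic} is stated for any norm satisfying the resolvent estimate~\eqref{eq: resolvent in Banach space} and any $1\le s\le\infty$, so it applies directly with $\vertiii{\cdot}=\norm{\cdot}_{L^1(\Om)}$, $M_h\le C$, and $s=1$.
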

The proof of this theorem is given in Section~\ref{sec: proofs global results}.

\subsubsection{Interior pointwise best approximation error estimates}

For the error at the point $x_0$ we can obtain a sharper result,  that shows  more localized behavior of the error at a fixed point. For elliptic problems similar results were obtained in~\cite{AHSchatz_LBWahlbin_1977a, AHSchatz_LBWahlbin_1995a}. We denote by $B_d=B_d(x_0)$ the  ball of radius $d$ centered at $x_0$.
\begin{theorem}[Interior best approximation]\label{thm:local_best_approx}
Let $u$ and $u_{kh}$ satisfy \eqref{eq: heat equation} and \eqref{eq:fully discrete heat with RHS}, respectively and let  $d>4h$. Let $\tilde t \in I_m$ with some $m \in \{1,2,\dots, M\}$ and $\overline{B}_d\subset\subset\Om$, then there exists a constant $C$ independent of $h$, $k$, and $d$ such that
$$
\begin{aligned}
|(u-u_{kh})(\tilde t,x_0)|\le C \lk \lh&\inf_{\chi \in \Xkh}\Biggl\{\|u-\chi\|_{L^\infty((0,t_m)\times B_d(x_0))}\\
 &+d^{-\frac{N}{2}}\bigg(\|u-\chi\|_{L^\infty((0,t_m);L^2(\Om))}+h\|\na(u-\chi)\|_{L^\infty((0,t_m);L^2(\Om))}\bigg)\Biggr\}.
\end{aligned}
$$
\end{theorem}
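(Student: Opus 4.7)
The plan is to mirror the strategy of Theorem \ref{thm:global_best_approx} but to localize the dual analysis around $x_0$ via the weighted resolvent estimate \eqref{eq: weighted resolven estimate intro}. First I would split the error through the elliptic Ritz projection $R_h$,
\begin{equation*}
u - u_{kh} = (u - R_h u) + (R_h u - u_{kh}).
\end{equation*}
The first summand at $(\tilde t, x_0)$ is a purely elliptic quantity at each fixed time, and the classical interior pointwise estimate of Schatz--Wahlbin \cite{AHSchatz_LBWahlbin_1977a, AHSchatz_LBWahlbin_1995a} applied at $t=\tilde t$ produces exactly the right-hand side structure of the theorem up to the factor $\lh$: a local $L^\infty$ term on $B_d$ plus $d^{-N/2}$ times the global $L^2$ and weighted $H^1$ terms. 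The main work is therefore the second, purely discrete summand.

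For $R_h u - u_{kh}$ I would argue by duality. Let $\tilde\delta_h\in V_h$ be a standard regularized Dirac supported in one element containing $x_0$, satisfying $(v_h,\tilde\delta_h)_\Omega = v_h(x_0)$ for all $v_h\in V_h$, and let $z_{kh}\in\Xkh$ be the discrete backward dual solution defined by $B(\varphi_{kh}, z_{kh}) = (\varphi_{kh}(\tilde t),\tilde\delta_h)_\Omega$ for all $\varphi_{kh}\in\Xkh$. Galerkin orthogonality $B(u-u_{kh},\varphi_{kh})=0$ then yields
\begin{equation*}
(R_h u - u_{kh})(\tilde t,x_0) = B(R_h u - u_{kh}, z_{kh}) = B(R_h u - u, z_{kh}),
\end{equation*}
and in the dual form \eqref{eq:B_dual} of $B$ the gradient term $(\nabla(R_h u-u),\nabla z_{kh})$ vanishes by the definition of $R_h$, so only contributions involving $\partial_t z_{kh}$ and the jumps $[z_{kh}]_m$ remain. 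Splitting the spatial integrals into $B_d$ and $\Omega\setminus B_d$ and using $\sigma^{-N/2}\le d^{-N/2}$ on the latter via weighted Cauchy--Schwarz, everything reduces to the estimate
\begin{equation*}
\|\partial_t z_{kh}\|_{L^1(I;L^1(\Omega))} + \sum_m \|[z_{kh}]_m\|_{L^1(\Omega)} + \|\sigma^{N/2}\partial_t z_{kh}\|_{L^1(I;L^2(\Omega))} + \sum_m \|\sigma^{N/2}[z_{kh}]_m\|_{L^2(\Omega)} \le C\lh\,\lk,
\end{equation*}
combined with converting the norms of $u-R_h u$ into $\inf_{\chi\in\Xkh}$ norms of $u-\chi$ via interior $L^\infty$ and global $L^2$ stability of $R_h$.

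The unweighted half of the bound on $z_{kh}$ is handled essentially as in the proof of Theorem \ref{thm:global_best_approx}, combining the discrete maximal parabolic regularity of Section \ref{sec: max and smoothing} with the $L^\infty$ resolvent estimate \eqref{eq: resolven estimate intro}. The main obstacle is the weighted half, in which one must show that the discrete backward dG solution generated by the almost-singular datum $\tilde\delta_h$ is well-behaved in the weighted spatial norm $\|\sigma^{N/2}\cdot\|_{L^2(\Omega)}$ integrated over time. My plan here is to represent $z_{kh}$ on each subinterval by a Dunford--Taylor contour integral involving the resolvent of $-\Delta_h$ outside the sector $\Sigma_\gamma$, to insert the new weighted resolvent estimate \eqref{eq: weighted resolven estimate intro} on the contour to gain a single $\lh$ factor in the weighted $L^2$ norm on each $I_m$, and then to sum over $m=1,\dots,M$ using the geometric time-step condition (ii) and the dyadic decomposition familiar from discrete maximal parabolic regularity to extract the additional $\lk$ factor. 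Feeding these bounds back into the duality identity and invoking the $L^\infty$ interior stability of $R_h$ to estimate $\|u-R_hu\|_{L^\infty(B_d)}$ by $\|u-\chi\|_{L^\infty(B_d)}$ plus the stated weighted corrections, and the $L^2$ stability of $R_h$ for the far-field term, will yield the interior best approximation estimate.
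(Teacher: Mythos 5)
Your overall architecture (duality with a discrete regularized Green's function, weighted Cauchy--Schwarz with $\sigma^{-N/2}\le Cd^{-N/2}$ away from $B_d$, the weighted resolvent estimate feeding into discrete smoothing bounds, Schatz--Wahlbin for the elliptic near field) is in the right spirit, but the central identity of your argument is false. You write
\[
(R_h u - u_{kh})(\tilde t,x_0) = B(R_h u - u_{kh}, z_{kh}),
\]
yet the reproducing property $B(\varphi_{kh},z_{kh})=(\varphi_{kh}(\tilde t),\tilde\delta_h)_\Om$ holds only for $\varphi_{kh}\in\Xkh$, and $R_hu\notin\Xkh$: it is $V_h$-valued at each time but not piecewise polynomial of degree $q$ in $t$. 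For a general $V_h$-valued function $\varphi$, $B(\varphi,z_{kh})$ returns certain time moments of $\varphi$ rather than $\varphi(\tilde t,x_0)$, so the discrepancy $(R_hu)(\tilde t,x_0)-B(R_hu,z_{kh})$ is neither zero nor obviously controlled by best-approximation quantities. The natural repair --- replacing $R_hu$ by a time interpolant $\pi_kR_hu\in\Xkh$ --- reintroduces the term $(R_hu-\pi_kR_hu)(\tilde t,x_0)$, i.e. exactly the time-regularity requirements that the Ritz-splitting technique is criticized for in the introduction and that a best approximation result must avoid. The paper sidesteps this entirely: it never splits off $R_hu$ at the level of point values, but keeps the valid identity $u_{kh}(\tilde t,x_0)=B(u_{kh},g_{kh})=B(u,g_{kh})$ and uses the Ritz projection only \emph{inside} the gradient term, via $(\na u,\na g_{kh})=(\na R_hu,\na g_{kh})=-(R_hu,\Delta_h g_{kh})$ together with Lemma~\ref{lemma: stablity of Ritz}; the localization is achieved by a spatial cutoff $\om$, with the near field $B(\om u,g_{kh})$ absorbed by the already-proved global Theorem~\ref{thm:global_best_approx} and only the far field $B((1-\om)u,g_{kh})$ treated in the weighted norms.

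Two smaller points. First, the step you call the main obstacle --- the weighted $L^1$-in-time smoothing bound for the dual solution --- needs no new Dunford--Taylor contour argument: Lemmas~\ref{lemma: smoothing L1 in time} and~\ref{lemma: fully discrete_maximal_parabolic} are already formulated for an arbitrary norm satisfying~\eqref{eq: resolvent in Banach space}, so it suffices to invoke them with $\vertiii{\cdot}=\norm{\sigma^{N/2}\cdot}_{L^2(\Om)}$ and $M_h\le C\lh$ (Theorem~\ref{thm:weightedResolvent}), controlling the datum by $\norm{\sigma^{N/2}P_h\tilde\delta}_{L^2(\Om)}\le C$ from Lemma~\ref{lemma:sigma_delta}. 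Second, your sketch only treats a terminal-time evaluation; for $\tilde t$ interior to a time step the dual problem must carry a regularized Dirac in time on the right-hand side and the smoothing lemma must be replaced by the maximal parabolic regularity lemma, as in the paper's Case 2.
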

The proof of this theorem is given in Section~\ref{sec: proofs local results}.

\section{Elliptic estimates in weighted norms}\label{sec:elliptic}

In this section we collect some estimates for the finite element discretization of elliptic problems in weighted norms on convex polyhedral domains mainly taken from~\cite{LeykekhmanD_VexlerB_2015c}. These results will be used in the following sections within the proofs of Theorem~\ref{thm:global_best_approx} and Theorem~\ref{thm:local_best_approx}.

Let $x_0 \in \Omega$ be a fixed (but arbitrary) point. Associated with this point we introduce a smoothed Delta function \cite[~Appendix]{AHSchatz_LBWahlbin_1995a}, which we will denote by $\tilde{\delta}=\tilde{\delta}_{x_0}$. This function is supported in one cell, which is denoted by $\tau_{x_0}$ and satisfies
\begin{equation}\label{eq: definition delta}
(\chi, \tilde{\delta})_{\tau_{x_0}}=\chi({x_0}), \quad \forall \chi\in \Ppol{r}(\tau_{x_0}).
\end{equation}
In addition we also have
\begin{equation}\label{delta1}
 \|\tilde{\delta}\|_{W^{s,p}(\Om)} \le C h^{-s-N(1-\frac{1}{p})}, \quad 1\le p \le \infty, \quad s=0,1.
\end{equation}
Thus in particular $\|\tilde{\delta}\|_{L^1(\Om)} \le C$, $\norm{\tilde{\delta}}_{L^2(\Om)} \le Ch^{-\frac{N}{2}}$, and  $\|\tilde{\delta}\|_{L^\infty(\Om)} \le Ch^{-N}$.
Next we introduce a weight function
\begin{equation}\label{eq: sigma weight}
\sigma(x) = \sqrt{|x-x_0|^2+K^2h^2},
\end{equation}
where $K>0$ is a sufficiently large constant.
One can easily check that $\sigma$ satisfies the following properties:
\begin{subequations}
\begin{align}
\norm{\sigma^{-\frac{N}{2}}}_{L^2(\Om)}&\le C\lh^{\frac{1}{2}}, \label{eq: property 1 of sigma}\\
|\na \sigma|&\le C, \label{eq: property 2 of sigma}\\
|\na^2 \sigma|&\le C| \sigma^{-1}| \label{eq: property 3 of sigma}\\
\max_{x\in\tau}{\sigma}&\le C\min_{x\in\tau}{\sigma}, \quad \forall \tau \label{eq: property 4 of sigma}.
\end{align}
\end{subequations}
For the finite element space $V_h$ we will utilize the $L^2$ projection $P_h \colon L^2(\Omega) \to V_h$ defined by
\begin{equation}\label{eq:l2_proj}
(P_hv,\chi)_{\Om} = (v,\chi)_{\Om}, \quad \forall \chi\in V_h,
\end{equation}
the Ritz projection $R_h \colon H^1_0(\Omega) \to V_h$ defined by
\begin{equation}\label{eq:Ritz_proj}
(\nabla R_hv,\nabla \chi)_{\Om} = (\nabla v,\nabla \chi)_{\Om}, \quad \forall \chi\in V_h,
\end{equation}
and the usual nodal interpolation $i_h \colon C_0(\Omega) \to V_h$. Moreover we introduce the discrete Laplace operator $\Delta_h \colon V_h \to V_h$ defined by
\begin{equation}\label{eq:discreteLaplace}
(-\Delta_h v_h,\chi)_{\Om} = (\nabla v_h,\nabla \chi)_{\Om}, \quad \forall \chi\in V_h.
\end{equation}

The following lemma is a superapproximation result in weighted norms.
\begin{lemma}[Lemma 2.3 in~\cite{LeykekhmanD_VexlerB_2015c}]\label{lemma:super_ih_ph}
Let $v_h \in V_h$. Then the following estimates hold for any $\alpha,\beta \in \R$ and $K$ large enough:
\begin{equation}\label{sigma_est_ih3_ih4}
\norm{\sigma^\alpha(\Id-i_h)(\sigma^\beta v_h)}_{L^2(\Om)} + h \norm{\sigma^\alpha\nabla(\Id-i_h)(\sigma^\beta v_h)}_{L^2(\Om)} \le c h \norm{\sigma^{\alpha+\beta -1} v_h}_{L^2(\Om)},
\end{equation}
\begin{equation}\label{sigma_est_ph3_ph4}
\norm{\sigma^\alpha(\Id-P_h)(\sigma^\beta v_h)}_{L^2(\Om)}  + h \norm{\sigma^\alpha\nabla(\Id-P_h)(\sigma^\beta v_h)}_{L^2(\Om)} \le c h \norm{\sigma^{\alpha+\beta -1} v_h}_{L^2(\Om)}.
\end{equation}
\end{lemma}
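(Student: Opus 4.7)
The plan is to establish the estimate for $i_h$ first and then reduce the estimate for $P_h$ to it via a projection identity. Throughout I work element by element, exploiting the crucial fact that on each cell $\tau$, the weight $\sigma$ is essentially constant by property~\eqref{eq: property 4 of sigma} (provided $K$ is chosen large enough, since then $\sigma \ge Kh$), so that factors of $\sigma^\gamma$ can be pulled out of $L^2(\tau)$ norms up to absolute constants.

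For the nodal interpolation estimate, I fix $\tau$ and pick any point $x_\tau \in \tau$; setting $c_\tau := \sigma^\beta(x_\tau)$, the reproduction property of $i_h$ on polynomials of degree $r$ gives $i_h(c_\tau v_h) = c_\tau v_h$ on $\tau$, hence on $\tau$
\[
(\Id - i_h)(\sigma^\beta v_h) = (\Id - i_h)\bigl((\sigma^\beta - c_\tau) v_h\bigr).
\]
I then apply the standard $H^2$-based interpolation estimate $\|(\Id - i_h) w\|_{L^2(\tau)} + h\|\nabla(\Id - i_h) w\|_{L^2(\tau)} \le C h^2 |w|_{H^2(\tau)}$ to $w = (\sigma^\beta - c_\tau)v_h$. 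Expanding $D^2 w$ by the product rule and using (i) the pointwise bound $|\nabla^j \sigma^\beta| \le C\sigma^{\beta-j}|_\tau$ (which follows from properties~\eqref{eq: property 2 of sigma}--\eqref{eq: property 3 of sigma} by induction on $j$), (ii) the Taylor bound $|\sigma^\beta - c_\tau|_\tau \le Ch\,\sigma^{\beta-1}|_\tau$, (iii) the inverse estimates $\|\nabla v_h\|_{L^2(\tau)} + h\|D^2 v_h\|_{L^2(\tau)} \le Ch^{-1}\|v_h\|_{L^2(\tau)}$, and (iv) the key absorption $\sigma^{\beta-2}|_\tau \le (Kh)^{-1}\sigma^{\beta-1}|_\tau$, I collapse all terms to the single bound $|w|_{H^2(\tau)} \le Ch^{-1}\sigma^{\beta-1}|_\tau\,\|v_h\|_{L^2(\tau)}$. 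Premultiplying by $\sigma^\alpha$ and using property~\eqref{eq: property 4 of sigma} once more gives the elementwise bound $\|\sigma^\alpha(\Id - i_h)(\sigma^\beta v_h)\|_{L^2(\tau)} + h\|\sigma^\alpha\nabla(\Id - i_h)(\sigma^\beta v_h)\|_{L^2(\tau)} \le Ch\,\|\sigma^{\alpha+\beta-1} v_h\|_{L^2(\tau)}$, and summing over $\tau$ yields \eqref{sigma_est_ih3_ih4}.

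For the $P_h$ estimate, since $i_h(\sigma^\beta v_h) \in V_h$ is preserved by $P_h$, one has the identity $(\Id - P_h)(\sigma^\beta v_h) = (\Id - P_h)\bigl((\Id - i_h)(\sigma^\beta v_h)\bigr)$. Triangle inequality reduces matters to controlling $\|\sigma^\alpha P_h((\Id - i_h)(\sigma^\beta v_h))\|_{L^2(\Omega)}$, for which I would invoke a weighted $L^2$-stability estimate of the form $\|\sigma^\alpha P_h v\|_{L^2(\Omega)} \le C\|\sigma^\alpha v\|_{L^2(\Omega)}$. The gradient part is handled by writing $\nabla(\Id - P_h)(\sigma^\beta v_h) = \nabla(\Id - i_h)(\sigma^\beta v_h) + \nabla(i_h - P_h)(\sigma^\beta v_h)$, applying the previous case to the first term, and using a weighted inverse inequality (itself a consequence of property~\eqref{eq: property 4 of sigma}) plus the $L^2$ bound on the $V_h$-function $(i_h - P_h)(\sigma^\beta v_h)$ for the second. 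The main obstacle is the weighted $L^2$-stability of $P_h$: unlike $i_h$, $P_h$ is nonlocal, and to avoid losing a logarithmic factor one needs either a dyadic decomposition of $\Omega$ by annuli $\{\sigma \sim 2^j h\}$ combined with pointwise decay estimates for the $P_h$ kernel, or a duality argument against a suitable dual weight — and it is precisely this step that forces $K$ to be chosen sufficiently large, so that the weight $\sigma$ varies slowly enough on the support of the local $P_h$ stencil for the standard superapproximation machinery to close.
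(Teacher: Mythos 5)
The paper itself contains no proof of this lemma---it is imported verbatim as Lemma~2.3 of \cite{LeykekhmanD_VexlerB_2015c}---so the comparison is against the standard superapproximation argument in that reference, which is essentially what you reproduce. Your treatment of $i_h$ is complete and correct: the local reproduction identity $(\Id-i_h)(\sigma^\beta v_h)=(\Id-i_h)\bigl((\sigma^\beta-c_\tau)v_h\bigr)$ on each cell, the elementwise $H^2$ interpolation bound, the derivative bounds $|\nabla^j\sigma^\beta|\le C\sigma^{\beta-j}$ coming from \eqref{eq: property 2 of sigma}--\eqref{eq: property 3 of sigma}, the inverse estimates, the absorption $\sigma^{-1}\le (Kh)^{-1}$, and the elementwise near-constancy \eqref{eq: property 4 of sigma} of $\sigma^\gamma$ combine exactly as you describe; the same goes for your reduction of the $P_h$ bound to the $i_h$ bound via $P_h i_h=i_h$ and a weighted inverse inequality for the gradient term. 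The one genuine gap is the weighted $L^2$-stability $\norm{\sigma^{\alpha}P_h v}_{L^2(\Om)}\le C\norm{\sigma^{\alpha}v}_{L^2(\Om)}$, which you invoke but do not establish. You are right that this is the only place where the nonlocality of $P_h$ matters; it is a true, classical fact for quasi-uniform meshes and weights that, like $\sigma$, vary slowly from cell to cell (it follows from the exponential decay of the $L^2$-projection, or from the Schatz--Wahlbin superapproximation framework), so the reduction does close, but as written this ingredient remains a black box. A minor quibble: the largeness of $K$ is not really ``forced'' by the $P_h$-stability step---for this lemma any fixed $K\ge 1$ already gives uniform constants in \eqref{eq: property 4 of sigma} and in the absorption $\sigma^{\beta-2}\le (Kh)^{-1}\sigma^{\beta-1}$; the hypothesis is inherited from the source, where large $K$ serves to make constants small in later kick-back arguments.
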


The next lemma describes a connection between the regularized Delta functional $\tilde \delta$ and the weight $\sigma$.
\begin{lemma}\label{lemma:sigma_delta}
There holds
\begin{equation}\label{eq:sigma_delta}
\norm{\sigma^{\frac{N}{2}} \tilde \delta}_{L^2(\Om)} + h\norm{\sigma^{\frac{N}{2}} \nabla \tilde \delta}_{L^2(\Om)} + \norm{\sigma^{\frac{N}{2}} P_h\tilde \delta}_{L^2(\Om)} \le C.
\end{equation}
\end{lemma}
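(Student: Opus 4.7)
The plan is to estimate the three summands separately. The first two are immediate: since $\tilde\delta$ is supported in the single cell $\tau_{x_0}$ of diameter $\le h$, the weight satisfies $\sigma(x)\le \sqrt{h^2 + K^2 h^2}\le C h$ on $\operatorname{supp}\tilde\delta$. Pulling out this factor and applying~\eqref{delta1}:
\begin{equation*}
\|\sigma^{N/2}\tilde\delta\|_{L^2(\Omega)}\le C h^{N/2}\|\tilde\delta\|_{L^2(\Omega)}\le C, \qquad h\|\sigma^{N/2}\nabla\tilde\delta\|_{L^2(\Omega)}\le C h^{1 + N/2}\|\nabla\tilde\delta\|_{L^2(\Omega)}\le C.
\end{equation*}

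The main work is the $P_h\tilde\delta$ term. Abbreviate $S := \|\sigma^{N/2}P_h\tilde\delta\|_{L^2(\Omega)}$. My strategy is a bootstrapping argument built on the duality identity obtained from the self-adjointness of $P_h$ and the defining property~\eqref{eq: definition delta} of $\tilde\delta$ (applied to $P_h(\sigma^N P_h\tilde\delta)|_{\tau_{x_0}}\in\Ppol{r}$):
\begin{equation*}
S^2 = (P_h\tilde\delta,\sigma^N P_h\tilde\delta)_\Omega = (\tilde\delta, P_h(\sigma^N P_h\tilde\delta))_\Omega = P_h(\sigma^N P_h\tilde\delta)(x_0).
\end{equation*}
I split this pointwise value as $i_h(\sigma^N P_h\tilde\delta)(x_0) + (P_h - i_h)(\sigma^N P_h\tilde\delta)(x_0)$. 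The first piece is a bounded linear combination of nodal values $\sigma(x_i)^N P_h\tilde\delta(x_i)$ on $\tau_{x_0}$; using $\sigma(x_i)\le Ch$ together with the inverse inequality $|P_h\tilde\delta(x_i)|\le Ch^{-N/2}\|P_h\tilde\delta\|_{L^2(\tau_{x_0})}$ and the $L^2$-stability $\|P_h\tilde\delta\|_{L^2(\Omega)}\le \|\tilde\delta\|_{L^2(\Omega)}\le Ch^{-N/2}$, this piece is bounded by an absolute constant.

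For the correction piece I apply an inverse inequality on $\tau_{x_0}$, then exploit $\sigma^{N/2}\le C h^{N/2}$ on $\tau_{x_0}$ to reshape the estimate into a $\sigma^{-N/2}$-weighted global norm, and finally invoke Lemma~\ref{lemma:super_ih_ph} with $\alpha = -N/2$, $\beta = N$, and $v_h = P_h\tilde\delta$ to obtain
\begin{equation*}
|(P_h - i_h)(\sigma^N P_h\tilde\delta)(x_0)|\le C h\|\sigma^{N/2 - 1}P_h\tilde\delta\|_{L^2(\Omega)}\le \frac{C}{K}\,S,
\end{equation*}
where the last step uses the pointwise lower bound $\sigma\ge K h$. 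Combining, $S^2\le C + (C/K) S$, and Young's inequality, with $K$ chosen large enough (consistent with the hypothesis of Lemma~\ref{lemma:super_ih_ph}), closes the bootstrap to give $S\le C$. The main obstacle I expect is the choice $\alpha = -N/2$: a naive $\alpha = 0$ yields an uncontrollable $h^{1-N/2}$ factor in dimension three, whereas $\alpha = -N/2$ exactly cancels the $h^{-N/2}$ from the inverse inequality and leaves a harmless $\sigma^{-1}$ that is absorbable thanks to $\sigma\ge K h$.
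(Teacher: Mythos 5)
Your argument is correct. Note that the paper does not prove this lemma itself but refers to \cite{ErikssonK_JohnsonC_1995a} for $N=2$ and to Lemma~2.4 of \cite{LeykekhmanD_VexlerB_2015c} for $N=3$; the standard argument there rests on the same two ingredients you use, namely $\sigma\simeq h$ on $\tau_{x_0}$ for the first two terms and a duality/superapproximation absorption for the $P_h\tilde\delta$ term. Where you differ is in the organization of that last step: the usual route writes $S^2=(\sigma^N P_h\tilde\delta,P_h\tilde\delta)=(\sigma^N P_h\tilde\delta,\tilde\delta)+((P_h-\Id)(\sigma^N P_h\tilde\delta),\tilde\delta)$, bounds the first pairing by $\|\sigma^{N/2}P_h\tilde\delta\|_{L^2}\|\sigma^{N/2}\tilde\delta\|_{L^2}\le CS$ and the second by $\|\sigma^{-N/2}(P_h-\Id)(\sigma^N P_h\tilde\delta)\|_{L^2}\|\sigma^{N/2}\tilde\delta\|_{L^2}\le CK^{-1}S$ via Lemma~\ref{lemma:super_ih_ph}, yielding $S^2\le CS$ directly; you instead convert $S^2$ into the point value $P_h(\sigma^N P_h\tilde\delta)(x_0)$ via \eqref{eq: definition delta}, which forces you to bring in the nodal interpolant $i_h$, an inverse estimate on $\tau_{x_0}$, and the boundedness of the Lagrange basis. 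Both close the same way (the choice $\alpha=-N/2$, $\beta=N$ and the lower bound $\sigma\ge Kh$ are exactly the right cancellation, as you observe), so your version is sound, just a little longer; one cosmetic remark is that the absorption $S^2\le C+(C/K)S$ does not actually need $K$ large --- the quadratic inequality already gives $S\le C$ --- the largeness of $K$ is only needed to invoke Lemma~\ref{lemma:super_ih_ph}.
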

The proof of the above lemma for $N=2$, for example, can be found in \cite{ErikssonK_JohnsonC_1995a} and for $N=3$ in~\cite{LeykekhmanD_VexlerB_2015c}, Lemma 2.4.

The next result shows that the Ritz projection is almost stable in $L^\infty$ norm.
\begin{lemma}\label{lemma: stablity of Ritz}
There exists a constant $C>0$ independent on $h$, such that for any $v \in L^\infty(\Om)\cap H^1_0(\Omega)$,
$$
\|R_hv\|_{L^\infty(\Om)}\le C\lh\|v\|_{L^\infty(\Om)}.
$$
\end{lemma}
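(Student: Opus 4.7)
The plan is to deduce this almost-stability from the classical pointwise best-approximation property of the elliptic Ritz projection on convex polyhedra,
\[
\|v - R_h v\|_{L^\infty(\Omega)} \le C|\ln h|\inf_{\chi \in V_h}\|v-\chi\|_{L^\infty(\Omega)},
\]
which is one of the main elliptic results of~\cite{LeykekhmanD_VexlerB_2015c} and whose proof leans on the weighted machinery already recorded in Lemmas~\ref{lemma:super_ih_ph} and~\ref{lemma:sigma_delta}. Once this best-approximation bound is available the conclusion is immediate: choosing $\chi=0$ and invoking the triangle inequality gives $\|R_h v\|_{L^\infty}\le \|v\|_{L^\infty}+\|v-R_h v\|_{L^\infty}\le (1+C|\ln h|)\|v\|_{L^\infty}$, which for $h$ sufficiently small absorbs the $1$ into $C|\ln h|$.

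To sketch how the underlying pointwise bound is proved, I first pick $x_0\in\Omega$ at which $|R_hv|$ attains its maximum (boundary points are excluded because $R_hv\in H^1_0$ is continuous) and use the regularized delta $\tilde\delta=\tilde\delta_{x_0}$ from \eqref{eq: definition delta} to represent $R_hv(x_0)=(R_hv,\tilde\delta)_\Omega$. Letting $g\in H^1_0(\Omega)\cap H^2(\Omega)$ solve $-\Delta g=\tilde\delta$ (the $H^2$-regularity uses convexity of $\Omega$) and setting $g_h=R_hg\in V_h$, integration by parts together with the defining properties of the two Ritz projections yields the chain
\[
R_hv(x_0) = (R_hv,\tilde\delta)_\Omega = (\nabla R_hv,\nabla g)_\Omega = (\nabla R_hv,\nabla g_h)_\Omega = (\nabla v,\nabla g_h)_\Omega.
\]
A cellwise integration by parts, with boundary contributions vanishing because $v\in H^1_0$ and interior face contributions collecting into jumps of $\partial_n g_h$, then gives
\[
|R_hv(x_0)| \le \|v\|_{L^\infty(\Omega)}\Bigl(\|\Delta_hg_h\|_{L^1(\Omega)}+\sum_F\|\jump{\partial_n g_h}\|_{L^1(F)}\Bigr),
\]
the sum running over the interior faces $F$ of $\mathcal T$.

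It remains to show that the bracketed discrete-Green's-function quantity is $O(|\ln h|)$. By Cauchy--Schwarz with the weight $\sigma$ of \eqref{eq: sigma weight}, the bound $\|\sigma^{-N/2}\|_{L^2(\Omega)}\le C|\ln h|^{1/2}$ from \eqref{eq: property 1 of sigma}, and a scaled trace inequality that absorbs an $h^{-1/2}$ on each face, this reduces to the weighted estimate
\[
\|\sigma^{N/2}\Delta_hg_h\|_{L^2(\Omega)} + h^{-1/2}\Bigl(\sum_F\|\sigma^{N/2}\jump{\partial_n g_h}\|_{L^2(F)}^2\Bigr)^{1/2}\le C|\ln h|^{1/2},
\]
which is precisely the key weighted elliptic bound for the discrete Green's function proved in~\cite{LeykekhmanD_VexlerB_2015c} via a dyadic-shell decomposition around $x_0$ combined with Lemmas~\ref{lemma:super_ih_ph} and~\ref{lemma:sigma_delta}.

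The main obstacle is exactly this last weighted $L^2$ bound on a three-dimensional convex polyhedron: the $H^2$-regularity of $g$ is only just enough to close the iteration, and the $|\ln h|^{1/2}$ factor ultimately arises from summing over $O(|\ln h|)$ dyadic annuli on which weighted energy estimates have to be chained. Since that machinery is already in place in~\cite{LeykekhmanD_VexlerB_2015c}, Lemma~\ref{lemma: stablity of Ritz} follows as a corollary.
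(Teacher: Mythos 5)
The paper does not prove this lemma at all: it is quoted as a known result, with references to \cite{AHSchatz_1980a}, \cite{AHSchatz_LBWahlbin_1982a} and, for the convex polyhedral case relevant here, to Theorem~3.1 of \cite{LeykekhmanD_VexlerB_2015c}. So there is no internal proof to compare against; what you have written is a sketch of the argument carried out in that cited reference, and as a sketch it is structurally sound. Your initial reduction (best approximation with $\chi=0$ plus the triangle inequality, absorbing the $1$ into $C\lh$ for $h\le h_0<1$) is fine, and the chain $R_hv(x_0)=(R_hv,\tilde\delta)=(\nabla R_hv,\nabla g)=(\nabla R_hv,\nabla R_hg)=(\nabla v,\nabla R_hg)$ together with the cellwise integration by parts and the weighted dyadic-shell estimate for the discrete Green's function is indeed the standard Schatz--Wahlbin route that \cite{LeykekhmanD_VexlerB_2015c} follows. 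One notational caution: after the cellwise integration by parts the volume term involves the \emph{elementwise} Laplacian of $g_h$ on each cell $\tau$ (which vanishes for $r=1$), not the discrete Laplacian $\Delta_h g_h$ defined in~\eqref{eq:discreteLaplace}; in this paper $\Delta_h$ is reserved for the latter, global, object, so writing $\|\Delta_h g_h\|_{L^1(\Omega)}$ there is an abuse that should be fixed if the sketch were expanded. Beyond that, the real content of the lemma is the weighted $L^2$ bound on the discrete Green's function in three dimensions, which you correctly identify as the hard step and correctly attribute to the machinery of \cite{LeykekhmanD_VexlerB_2015c}; your writeup is an outline that defers to that reference exactly where the paper itself does.
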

 For smooth domains such result was established in \cite{AHSchatz_LBWahlbin_1982a}, for polygonal domains in \cite{AHSchatz_1980a},
and for convex polyhedral domains  in~\cite[Thm.~3.1]{LeykekhmanD_VexlerB_2015c}. In the case of smooth domains or for convex polygonal domains the logarithmic factor can be removed for higher than piecewise linear order elements, i.e.  $r\geq 2$. The question of log-free stability result for convex polyhedral domains is still open.

 Next lemma is rather peculiar and can be thought as weighted Gagliardo-Nirenberg interpolation inequality.  The proof  is in \cite{LeykekhmanD_VexlerB_2015c},~Lemma 2.5.
\begin{lemma}\label{lemma: from weighted L2 to weighted in H1}
Let $N=3$. There exists a constant $C$ independent of $K$ and $h$ such that for any $f\in H^1_0(\Om)$, any $\alpha,\beta \in \mathbb{R}$ with $\alpha \ge -\frac{1}{2}$ and any $1\le p\le \infty$, $\frac{1}{p}+\frac{1}{p'}=1$ holds:
$$
\|\sigma^{\alpha}f\|^2_{L^2(\Om)}\le C\|\sigma^{\alpha-\beta}f\|_{L^p(\Om)} \|\sigma^{\alpha + 1 +\beta}\na f\|_{L^{p'}(\Om)},
$$
provided $\|\sigma^{\alpha-\beta}f\|_{L^p(\Om)}$ and  $\|\sigma^{\alpha + 1 +\beta}\na f\|_{L^{p'}(\Om)}$ are bounded.
\end{lemma}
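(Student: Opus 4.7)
The plan is to reduce the inequality to a single integration by parts, powered by a pointwise identity for $\sigma$ that is specifically favorable in three dimensions. A direct computation from $\nabla\sigma=(x-x_0)/\sigma$ gives
\[
\Delta\sigma=\frac{N-1}{\sigma}+\frac{K^{2}h^{2}}{\sigma^{3}},
\]
so for $N=3$ one obtains the pointwise bound $\sigma^{-1}\le \tfrac{1}{2}\Delta\sigma$. Writing $\sigma^{2\alpha}=\sigma^{-1}\cdot\sigma^{2\alpha+1}$, this immediately yields
\[
\|\sigma^{\alpha}f\|_{L^{2}(\Omega)}^{2}
=\int_{\Omega}\sigma^{-1}\,\sigma^{2\alpha+1}f^{2}\,dx
\le \tfrac{1}{2}\int_{\Omega}(\Delta\sigma)\sigma^{2\alpha+1}f^{2}\,dx.
\]

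Next I would integrate by parts. Since $f\in H^{1}_{0}(\Omega)$, the factor $\sigma^{2\alpha+1}f^{2}$ has zero trace on $\partial\Omega$ (and since $\sigma\ge Kh>0$, the weight is smooth and bounded away from zero, so no singularity obstructs the calculation), which kills the boundary term. Using the product rule $\nabla(\sigma^{2\alpha+1}f^{2})=(2\alpha+1)\sigma^{2\alpha}f^{2}\nabla\sigma+2\sigma^{2\alpha+1}f\nabla f$, I obtain
\[
\int_{\Omega}(\Delta\sigma)\sigma^{2\alpha+1}f^{2}\,dx
=-(2\alpha+1)\int_{\Omega}|\nabla\sigma|^{2}\sigma^{2\alpha}f^{2}\,dx
-2\int_{\Omega}\sigma^{2\alpha+1}f(\nabla\sigma\cdot\nabla f)\,dx.
\]
The hypothesis $\alpha\ge -\tfrac{1}{2}$ is used exactly here: it makes $2\alpha+1\ge 0$, so the first right-hand side term is nonpositive and can be dropped. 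Combining this with the pointwise inequality $|\nabla\sigma|\le 1$ from \eqref{eq: property 2 of sigma} on the remaining cross term gives
\[
\|\sigma^{\alpha}f\|_{L^{2}(\Omega)}^{2}\le \int_{\Omega}\sigma^{2\alpha+1}|f|\,|\nabla f|\,dx.
\]

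Finally, I would split the weight as $\sigma^{2\alpha+1}=\sigma^{\alpha-\beta}\cdot\sigma^{\alpha+1+\beta}$ and apply Hölder's inequality with exponents $p$ and $p'$ to arrive at the claimed bound. No step presents a serious obstacle; the key idea is the dimension-specific inequality $\sigma^{-1}\le \tfrac{1}{2}\Delta\sigma$, and the condition $\alpha\ge -\tfrac{1}{2}$ is exactly what makes the otherwise-unfavorable lower-order term after integration by parts have the right sign. The restriction $N=3$ enters solely through the constant $N-1=2$; the same argument would go through verbatim for $N=2$ with a different multiplicative constant.
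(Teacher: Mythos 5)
Your proof is correct. The computation $\Delta\sigma=\frac{N-1}{\sigma}+\frac{K^2h^2}{\sigma^3}$ checks out, the boundary term vanishes since $\sigma^{2\alpha+1}f^2\in W^{1,1}_0(\Omega)$ (the weight being smooth and bounded away from zero because $\sigma\ge Kh$), the sign condition $2\alpha+1\ge 0$ is used exactly where it must be, and the final H\"older split $\sigma^{2\alpha+1}=\sigma^{\alpha-\beta}\sigma^{\alpha+1+\beta}$ delivers the stated bound with a constant independent of $K$ and $h$ (in fact $C=1$). Note that the paper does not prove this lemma itself but defers to Lemma~2.5 of \cite{LeykekhmanD_VexlerB_2015c}; the argument there rests on the same mechanism (integration by parts against $\Delta\sigma$ combined with $\sigma^{-1}\le\frac{1}{N-1}\Delta\sigma$), so your proposal is essentially a self-contained reproduction of that proof rather than a genuinely different route, and your closing remark that only the constant changes for $N=2$ is also accurate.
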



\section{Weighted resolvent estimates}\label{sec: weighted resolvent}
In this section we will prove weighted resolvent estimates in two and three space dimensions. We will require such estimates to derive smoothing type estimates in the weighted norms in Section~\ref{sec: max and smoothing}.
Since in this section (only) we will be dealing with complex valued function spaces, we need to  modify the definition of the $L^2$-inner product as
$$
(u,v)_\Om = \int_\Omega u(x) \bar v(x) \, dx,
$$
where $\bar v$ is the complex conjugate of $v$ and the finite element space as $\mathbb{V}_h = V_h + i V_h$.

In the continuous case for Lipschitz domains the following result was shown in \cite{ShenZW_1995a}: For any $\gamma\in (0,\frac{\pi}{2})$ there exists a constant $C$ independent of $z$ such that
\begin{equation}\label{eq: continuous resolvent}
\|(z+\Delta)^{-1}v\|_{L^p(\Om)}\le  \frac{C}{1+\abs{z}}\|v\|_{L^p(\Om)},\quad  z \in \C \setminus \Sigma_{\gamma}, \quad 1\le p\le \infty, \quad v \in L^p(\Omega),
\end{equation}
where $\Sigma_{\gamma}$ is defined by
\begin{equation}\label{eq: definition of sigma}
\Sigma_\gamma= \Set{z \in \mathbb{C} | \abs{\arg{z}} \le \gamma}.
\end{equation}
In the finite element setting, it is also known that  for any $\gamma\in (0,\frac{\pi}{2})$ there exists a constant $C$ independent of $h$ and $z$ such that
\begin{equation}\label{eq: resolven estimate}
\|(z+\Delta_h)^{-1}\chi\|_{L^\infty(\Om)}\le  \frac{C}{1+|z|}\|\chi\|_{L^\infty(\Om)},\quad\text{for}\ z\in \mathbb{C}\setminus \Sigma_{\gamma},\quad \text{for all}\ \chi\in \mathbb{V}_h.
\end{equation}
For smooth domains such result is established in \cite{BakaevNY_ThomeeV_WahlbinLB_2003a} and for convex polyhedral domains with a constant containing $\lh$ in \cite{LeykekhmanD_VexlerB_2015c}. In \cite{LiB_2015a} the above resolvent result is established for convex polyhedral domains for some $\gamma\in (0,\frac{\pi}{2})$, but with a constant $C$ independent of $h$.

Our goal in this section is to establish the following resolvent estimate in the weighted norm.

\begin{theorem}\label{thm:weightedResolvent}
For any $\gamma\in(0,\frac{\pi}{2})$, there exists a constant $C$ independent of $h$ and $z$ such that
\[
\norm{\sigma^{\frac{N}{2}}(z+\Delta_h)^{-1}\chi}_{L^2(\Om)} \le \frac{C \lh}{\abs{z}} \norm{\sigma^{\frac{N}{2}} \chi}_{L^2(\Om)}, \quad\text{for}\ z\in \mathbb{C}\setminus \Sigma_{\gamma},
\]
for all $\chi \in \mathbb{V}_h$, where $\Sigma_{\gamma}$ is defined in \eqref{eq: definition of sigma}.
\end{theorem}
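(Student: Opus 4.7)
Let $v_h:=(z+\Delta_h)^{-1}\chi\in\mathbb{V}_h$, which satisfies
\begin{equation*}
z(v_h,\varphi_h)_\Omega-(\nabla v_h,\nabla\varphi_h)_\Omega=(\chi,\varphi_h)_\Omega\quad\text{for all }\varphi_h\in\mathbb{V}_h.
\end{equation*}
The plan is to test with the discrete weighted function $\varphi_h:=P_h(\sigma^N v_h)\in\mathbb{V}_h$. Since $P_h$ is self-adjoint for $(\cdot,\cdot)_\Omega$ and both $v_h,\chi\in\mathbb{V}_h$, the mass pairings collapse to $(v_h,\varphi_h)_\Omega=\|\sigma^{N/2}v_h\|^2$ and $(\chi,\varphi_h)_\Omega=(\sigma^{N/2}\chi,\sigma^{N/2}v_h)_\Omega$, while the gradient pairing, after writing $\nabla P_h(\sigma^N v_h)=\nabla(\sigma^N v_h)-\nabla(I-P_h)(\sigma^N v_h)$ and applying the product rule, yields the identity
\begin{equation*}
z\|\sigma^{N/2}v_h\|^2-\|\sigma^{N/2}\nabla v_h\|^2
= N(\nabla v_h,\sigma^{N-1}\nabla\sigma\,v_h)_\Omega
- (\nabla v_h,\nabla(I-P_h)(\sigma^N v_h))_\Omega
+ (\sigma^{N/2}\chi,\sigma^{N/2}v_h)_\Omega.
\end{equation*}

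Set $A:=\|\sigma^{N/2}v_h\|$, $B:=\|\sigma^{N/2}\nabla v_h\|$, $F:=\|\sigma^{N/2}\chi\|$. Using $|\nabla\sigma|\le C$ from~\eqref{eq: property 2 of sigma}, the weighted-gradient cross term is bounded by $CB\|\sigma^{N/2-1}v_h\|$; Lemma~\ref{lemma:super_ih_ph} applied with $\alpha=-N/2$, $\beta=N$ controls the superapproximation commutator by the same quantity; and the data pairing contributes at most $FA$. To exploit the sector condition $z\in\mathbb{C}\setminus\Sigma_\gamma$, I split into two regimes. If $|\arg z|\le\pi/2$, then $|\mathrm{Im}(z)|\ge\sin\gamma\,|z|$ and taking imaginary parts annihilates the real quantity $B^2$, giving $|z|A^2\le C(B\|\sigma^{N/2-1}v_h\|+FA)$. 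If $|\arg z|>\pi/2$, then $\mathrm{Re}(z)\le 0$ and taking real parts yields $|\mathrm{Re}(z)|A^2+B^2\le C(B\|\sigma^{N/2-1}v_h\|+FA)$, which combined with the preceding relation (valid whenever $|\mathrm{Im}(z)|\ge\sin\gamma\,|z|$, i.e.\ away from the negative real axis) covers all of $\mathbb{C}\setminus\Sigma_\gamma$. In either case the real part additionally supplies $B^2\le C(|z|A^2+B\|\sigma^{N/2-1}v_h\|+FA)$.

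The central step is the control of the intermediate weight $\|\sigma^{N/2-1}v_h\|$. For $N=3$ this is $\|\sigma^{1/2}v_h\|$, and Lemma~\ref{lemma: from weighted L2 to weighted in H1} with $\alpha=1/2$, $\beta=0$, $p=p'=2$ yields $\|\sigma^{1/2}v_h\|\le C\|\sigma^{3/2}\nabla v_h\|=CB$. Plugging into the two inequalities above, performing a Young absorption of $B^2$, and bootstrapping the surviving $L^\infty$-type residual through the almost stability of the Ritz projection (Lemma~\ref{lemma: stablity of Ritz}) produces $|z|A\le C|\ln h|\,F$. For $N=2$, the weighted Gagliardo-Nirenberg is unavailable and $\|\sigma^{N/2-1}v_h\|=\|v_h\|_{L^2}$ is instead controlled by passing through $\|v_h\|_{L^\infty}$, using the $L^\infty$ resolvent estimate~\eqref{eq: resolven estimate} together with the weight integrability $\|\sigma^{-N/2}\|_{L^2}\le C|\ln h|^{1/2}$ from~\eqref{eq: property 1 of sigma}; this bootstrap is again where the single logarithmic factor of the final bound enters.

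The main obstacle is the tight coupling of $A$ and $B$: the cross term and the superapproximation commutator both deliver contributions of order $B\cdot\|\sigma^{N/2-1}v_h\|$, which precludes a naive absorption into $B^2$. Breaking this coupling requires either the weighted Gagliardo-Nirenberg (in three dimensions) or an $L^\infty$ bootstrap (in two dimensions), and the use of the $L^\infty$-stability of the Ritz projection in the residual step is precisely what generates the $|\ln h|$ factor. A secondary difficulty is the uniform handling of the sector condition: because $z\in\mathbb{C}\setminus\Sigma_\gamma$ may lie arbitrarily close to the imaginary or negative real axes, neither the imaginary nor the real part alone controls $|z|$, which is why the argument must be split into the two regimes above.
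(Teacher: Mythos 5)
Your setup is sound and matches the paper's: testing with $P_h(\sigma^N v_h)$, using the self-adjointness of $P_h$, the superapproximation Lemma~\ref{lemma:super_ih_ph}, and a sector argument to arrive at $\abs{z}A^2+B^2\le C(B\,\|\sigma^{N/2-1}v_h\|+FA)$ is exactly the paper's first step (the paper handles the sector more cleanly by rotating the identity by $e^{-i\alpha/2}$, but your two-regime split is also fine). The proof, however, lives or dies on how $\|\sigma^{N/2-1}v_h\|$ is controlled, and that is where your argument breaks. For $N=3$ you invoke Lemma~\ref{lemma: from weighted L2 to weighted in H1} with $\alpha=\tfrac12$, $\beta=0$, $p=2$ to get $\|\sigma^{1/2}v_h\|\le CB$ with $B=\|\sigma^{3/2}\nabla v_h\|$. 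Substituting this back turns the right-hand side into $CB^2+CFA$ with an $O(1)$ constant $C$ coming from superapproximation and the weighted Gagliardo--Nirenberg inequality; this $CB^2$ cannot be absorbed into the $B^2$ on the left, so no Young argument closes the estimate. The paper avoids this circularity by an entirely separate second energy estimate: it tests with $P_h(\sigma u_h)$ to get $\abs{z}\|\sigma^{1/2}u_h\|^2+\|\sigma^{1/2}\nabla u_h\|^2\le C(\|\sigma^{-1/2}u_h\|^2+\|\sigma^{3/2}\chi\|^2)$, then applies the Gagliardo--Nirenberg lemma with the \emph{negative} exponents $\alpha=\beta=-\tfrac12$ and $p=3$, and closes via an $L^1\to L^3$ discrete resolvent bound (Lemma~\ref{lemma:resolvent_l3}, itself resting on the $L^3$ estimate $\|G_h\|_{L^3}\le C\lh^{1/3}$ for the discrete resolvent Green's function) together with $\|\sigma^{-3/2}\|_{L^2}\le C\lh^{1/2}$ and $\|\sigma^{-1/2}\|_{L^6}\le C\lh^{1/6}$. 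None of this machinery appears in your proposal, and it is the technical heart of the three-dimensional case.

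Two further points. For $N=2$ your bootstrap of $\|v_h\|_{L^2}$ ``through $\|v_h\|_{L^\infty}$ using the $L^\infty$ resolvent estimate~\eqref{eq: resolven estimate}'' cannot work as stated: that estimate bounds $\|v_h\|_{L^\infty}$ by $\|\chi\|_{L^\infty}$, and $\|\chi\|_{L^\infty}$ is not controlled by $\|\sigma\chi\|_{L^2}$ (take $\chi=P_h\tilde\delta$). The paper instead tests with $\varphi=u_h$ and uses the discrete Sobolev inequality $\|v_h\|_{L^\infty(\Om)}\le C\lh^{1/2}\|\na v_h\|_{L^2(\Om)}$ together with \eqref{eq: property 1 of sigma}, which is a different tool. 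Finally, your claim that the logarithm is generated by the $L^\infty$ almost-stability of the Ritz projection (Lemma~\ref{lemma: stablity of Ritz}) is incorrect: the Ritz projection plays no role in this theorem; the factor $\lh$ enters through the weight integrability $\int_\Om\sigma^{-N}\,dx\le C\lh$ and, in three dimensions, through the $L^3$ bound on $G_h$.
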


\subsection{Proof of Theorem \ref{thm:weightedResolvent} for \boldmath{$N=2$}}

For an arbitrary $\chi \in \mathbb{V}_h$ we define
\[
u_h = (z+\Delta_h)^{-1} \chi,
\]
or equivalently
\begin{equation}\label{eq: weak form zu_h N=2}
z(u_h,\varphi)-(\na u_h,\na \varphi)=(\chi,\varphi), \quad \forall \varphi\in \mathbb{V}_h.
\end{equation}
In this section the norm $\|\cdot\|$ will stand for $\|\cdot\|_{L^2(\Om)}$.
To estimate $ \|\sigma u_h\|$ we consider the expression
\begin{equation}\label{eq: expression from Thomee book N=2}
\|\sigma \na u_h\|^2=( \na(\sigma^2u_h),\na u_h)-2(\sigma\na\sigma u_h, \na u_h).
\end{equation}
By taking $\varphi=-P_h(\sigma^2u_h)$ in \eqref{eq: weak form zu_h N=2} and adding it to \eqref{eq: expression from Thomee book N=2}, we obtain
\begin{equation}\label{eq: estimates for z u_h with F N=2}
-z\|\sigma u_h\|^2+\|\sigma \na u_h\|^2=F,
\end{equation}
where
$$
F=F_1+F_2+F_3:=-(\sigma^2u_h,\chi)+(\na(\sigma^2u_h-P_h(\sigma^2u_h)),\na u_h)-2(\sigma\na\sigma u_h, \na u_h).
$$
Since $\gamma\le \abs{\arg{z}}\le \pi$, this equation is of the form
$$
e^{i\alpha}a + b = f,\quad \text{with}\quad a,b > 0,\quad 0\le |\alpha|\le \pi-\gamma,
$$
by multiplying it by $e^{-\frac{i\alpha}{2}}$ and taking real parts, we have
$$
a + b \le \left(\cos\left(\frac{\alpha}{2}\right)\right)^{-1}|f| \le \left(\sin\left(\frac{\gamma}{2}\right)\right)^{-1}|f| = C_\gamma|f|.
$$
From \eqref{eq: estimates for z u_h with F N=2} we therefore conclude
$$
\abs{z}\|\sigma u_h\|^2+\|\sigma \na u_h\|^2\le C_\gamma|F|, \quad\text{for}\ z\in \mathbb{C}\setminus \Sigma_{\gamma}.
$$
Using the Cauchy-Schwarz inequality and the arithmetic-geometric mean inequality we obtain,
$$
|F_1| = \abs{(\sigma^2 u_h,\chi)} \le \|\sigma u_h\|\|\sigma\chi\|\le CC_{\gamma}\abs{z}^{-1}\|\sigma\chi\|^2+\frac{\abs{z}}{2C_\gamma}\|\sigma u_h\|^2.
$$
To estimate $F_2$ we use Lemma \ref{lemma:super_ih_ph}, the Cauchy-Schwarz and the arithmetic-geometric mean inequalities,
$$
|F_2|\le \|\sigma^{-1}\na(\sigma^2u_h-P_h(\sigma^2u_h))\|\|\sigma\na u_h\|\le \frac{1}{4C_\gamma}\|\sigma\na u_h\|^2 + CC_\gamma\|u_h\|^2.
$$
Finally, using the properties of $\sigma$, we obtain
$$
|F_3|\le C\| u_h\|\|\sigma\na u_h\|\le  \frac{1}{4C_\gamma}\|\sigma\na u_h\|^2 + CC_\gamma\|u_h\|^2.
$$
Combining estimates for $F_i's$ and kicking back, we obtain
\begin{equation}\label{eq: sigma3/2 Gh after first step N=2}
\abs{z}\|\sigma u_h\|^2+\|\sigma \na u_h\|^2\le C_\gamma^2\left(\abs{z}^{-1}\norm{\sigma \chi}^2+\|u_h\|^2\right).
\end{equation}
Thus, in order to establish the desired weighted resolvent estimate, we need to show
\begin{equation}\label{eq: desired estimate for uh}
\| u_h\|^2\le C\lh^2\abs{z}^{-1}\norm{\sigma \chi}^2.
\end{equation}
To accomplish that, testing~\eqref{eq: weak form zu_h N=2} with $\varphi= u_h$, we obtain similarly as above
$$
\abs{z}\|u_h\|^2+\|\na u_h\|^2\le C_{\gamma}|f|, \quad\text{for}\quad z\in \C \setminus \Sigma_{\gamma},
$$
where $f=(\chi, u_h)$. Using the discrete  Sobolev  inequality (see \cite[~Lemma 1.1]{AHSchatz_VThomee_LBWahlbin_1980a}),
$$
\|v_h\|_{L^\infty(\Om)}\le C|\ln{h}|^{\frac{1}{2}}\|\na v_h\|_{L^2(\Om)}, \quad \forall v_h\in V_h,
$$
and using the property of $\sigma$ \eqref{eq: property 1 of sigma}, we obtain
$$
\begin{aligned}
\abs{z}\|u_h\|^2+\|\na u_h\|^2&\le C_{\gamma}\|\si\chi\|_{L^2(\Om)}\|\si^{-1}u_h\|_{L^2(\Om)}\\
&\le C_{\gamma}\|\sigma\chi\|_{L^2(\Om)}\|\sigma^{-1}\|_{L^2(\Om)}\| u_h\|_{L^\infty(\Om)}\\
&\le C_{\gamma}\lh\|\sigma\chi\|_{L^2(\Om)}\|\na u_h\|_{L^2(\Om)}\\
&\le C^2_{\gamma}\lh^2\|\sigma\chi\|^2_{L^2(\Om)}+\frac{1}{2}\|\na u_h\|^2_{L^2(\Om)}.
\end{aligned}
$$
Kicking back $\frac{1}{2}\|\na u_h\|^2_{L^2(\Om)}$, we establish \eqref{eq: desired estimate for uh} and hence Theorem \ref{thm:weightedResolvent} in the case of $N=2$.

\subsection{Proof of Theorem~\ref{thm:weightedResolvent} for \boldmath{$N=3$}}

The three dimensional case is more involved and we require
 some auxiliary results. For a given point $x_0 \in \Omega$, we introduce
 the adjoint regularized Green's function $G=G^{x_0}(x,\bar{z})$ by
\[
G = G^{x_0}(x,\bar{z})=(\bar{z}+\Delta)^{-1}\tilde{\delta}
\]
and its discrete analog $G_h=G_h^{x_0}(x,\bar{z}) \in \mathbb{V}_h$ by
\[
G_h = G_h^{x_0}(x,\bar{z})=(\bar{z}+\Delta_h)^{-1}P_h\tilde{\delta},
\]
which we can write in the weak form as
\begin{equation}\label{eq: weak form zG_h}
z(\varphi, G_h)-(\na \varphi,\na G_h)=(\varphi,\tilde{\delta}), \quad \forall \varphi\in \mathbb{V}_h.
\end{equation}

From~\cite{LeykekhmanD_VexlerB_2015c} we have the following result.
\begin{lemma}[\cite{LeykekhmanD_VexlerB_2015c}]\label{lemma:G_h_L_3}
Let $G_h \in \mathbb{V}_h$ be defined by~\eqref{eq: weak form zG_h}. There holds
\[
\|G_h\|_{L^3(\Om)}\le C\lh^{\frac{1}{3}}.
\]
\end{lemma}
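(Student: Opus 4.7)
The target $\lh^{1/3}$ matches the $L^3$-norm of the continuous $3$D regularized Dirichlet Green's function, whose principal singular part is $1/(4\pi|x-x_0|)$ and whose $L^3$-norm on $\{|x-x_0|>h\}$ is precisely of order $\lh^{1/3}$. Moreover, the shift $z$ can only improve the decay of $G_h$ away from $x_0$, so it suffices to control the worst case uniformly in $z \in \C\setminus\Sigma_\gamma$. My plan would proceed in two stages.

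\emph{Stage 1 (reduction via weighted H\"older).} Apply H\"older's inequality with the weight $\sigma^{-3/2}$:
\[
\|G_h\|_{L^3(\Om)}^3
= \int_\Om \sigma^{-3/2}\bigl(\sigma^{1/2}|G_h|\bigr)^3 dx
\le \|\sigma^{-3/2}\|_{L^2(\Om)}\,\|\sigma^{1/2}G_h\|_{L^6(\Om)}^3.
\]
By~\eqref{eq: property 1 of sigma}, $\|\sigma^{-3/2}\|_{L^2(\Om)} \le C\lh^{1/2}$, so the lemma reduces to showing $\|\sigma^{1/2}G_h\|_{L^6(\Om)}^6 \le C\lh$.

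\emph{Stage 2 (dyadic annular decomposition).} Partition $\Om$ into an inner ball $A_0 = \{|x-x_0|<2h\}$ and annuli $A_j = \{2^j h\le|x-x_0|<2^{j+1}h\}$ for $j=1,\dots,J$, where $2^J h\sim\diam(\Om)$ and hence $J\sim\lh$. On $A_j$ the weight satisfies $\sigma\sim d_j:=2^j h$ and $|A_j|\sim d_j^3$, so
\[
\|\sigma^{1/2}G_h\|_{L^6(A_j)}^6 \le C d_j^3 \|G_h\|_{L^6(A_j)}^6 \le C d_j^3 \|G_h\|_{L^\infty(A_j)}^4 \|G_h\|_{L^2(A_j)}^2.
\]
The key point is to establish the $j$-uniform local estimates
\[
\|G_h\|_{L^\infty(A_j)} \le C/d_j,\qquad \|G_h\|_{L^2(A_j)}^2 \le C d_j,
\]
which match the behavior of $|x-x_0|^{-1}$. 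Plugging them in yields an $O(1)$ contribution per annulus; the inner ball $A_0$ also contributes $O(1)$ via the global bound $\|G_h\|_{L^\infty(\Om)} \le C h^{-1}$ (the natural scaling of a discrete $3$D regularized Green's function) and $|A_0|\sim h^3$. Summing over the $J\sim\lh$ annuli gives $\|\sigma^{1/2}G_h\|_{L^6(\Om)}^6 \le C\lh$, which combined with Stage 1 proves the lemma.

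\emph{Main obstacle.} The crux is the local estimates in Stage 2, uniformly in $z\in\C\setminus\Sigma_\gamma$. These would be obtained by testing the weak form~\eqref{eq: weak form zG_h} with $\varphi = P_h(\eta_j^2 G_h)$, where $\eta_j$ is a smooth cut-off supported in a slight enlargement of $A_j$ and equal to $1$ on $A_j$. The rotation-in-$\C$ trick used in the 2D proof of Theorem~\ref{thm:weightedResolvent} above gives coercivity thanks to $z\notin\Sigma_\gamma$; the superapproximation Lemma~\ref{lemma:super_ih_ph} handles the $L^2$-projection error introduced by applying $P_h$ to the cut-off test function; and since $\tilde\delta$ is supported in the single cell containing $x_0$, it appears on the right-hand side only for the innermost annuli, where Lemma~\ref{lemma:sigma_delta} applies. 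The pointwise bound $\|G_h\|_{L^\infty(A_j)}\le C/d_j$ is the most delicate piece and would follow by combining the local $L^2$ bound with an interior pointwise estimate of Schatz--Wahlbin type, or equivalently by iterating the local energy bound over nested sub-annuli and closing via an inverse inequality.
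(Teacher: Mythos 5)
First, a point of reference: the paper does not prove this lemma at all --- it is imported verbatim from \cite{LeykekhmanD_VexlerB_2015c} --- so there is no in-paper argument to compare against, and your proof has to stand on its own. Your Stage~1 reduction is correct: $\|G_h\|_{L^3(\Om)}^3\le\|\sigma^{-3/2}\|_{L^2(\Om)}\|\sigma^{1/2}G_h\|_{L^6(\Om)}^3\le C\lh^{1/2}\|\sigma^{1/2}G_h\|_{L^6(\Om)}^3$ by \eqref{eq: property 1 of sigma} with $N=3$, and the per-annulus bookkeeping in Stage~2 is arithmetically consistent. But the entire weight of the argument rests on the two local estimates $\|G_h\|_{L^\infty(A_j)}\le C/d_j$ and $\|G_h\|_{L^2(A_j)}^2\le Cd_j$, uniformly in $z\in\C\setminus\Sigma_\gamma$, and these are asserted rather than proved. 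They amount to a sharp pointwise bound on the discrete resolvent kernel on a convex polyhedral domain, which is at least as deep as the resolvent estimate \eqref{eq: resolven estimate} that this entire section of the paper is building machinery to prove; it is not a lemma one can wave at.

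Concretely, none of the mechanisms you sketch closes this. Testing \eqref{eq: weak form zG_h} with $P_h(\eta_j^2G_h)$ and using Lemma~\ref{lemma:super_ih_ph} yields a Caccioppoli-type inequality bounding $\|\nabla G_h\|_{L^2(A_j)}$ and $|z|^{1/2}\|G_h\|_{L^2(A_j)}$ by $d_j^{-1}\|G_h\|_{L^2(A_j')}$ on an enlarged annulus; this relates local norms to local norms and cannot produce the absolute bound $\|G_h\|_{L^2(A_j)}^2\le Cd_j$ without a global anchor, and the global weighted bounds actually obtainable from the Section~4 machinery (e.g.\ $\|\sigma^{1/2}G_h\|_{L^2(\Om)}\le C$ or $\|\sigma^{-1/2}G_h\|_{L^2(\Om)}^2\le C\lh$) localize only to $\|G_h\|_{L^2(A_j)}^2\le Cd_j^{-1}$ or $Cd_j\lh$ --- the first useless for small $d_j$, the second carrying a log that, after summing over the $\sim\lh$ annuli and raising to the relevant powers, overshoots $\lh^{1/3}$ by a wide margin. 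The pointwise bound is worse: an inverse inequality gives $\|G_h\|_{L^\infty(A_j)}\le Ch^{-3/2}\|G_h\|_{L^2(A_j')}$, which is $Ch^{-3/2}d_j^{1/2}\gg d_j^{-1}$ for $d_j\gg h$, and a Schatz--Wahlbin interior maximum-norm estimate applies to the elliptic operator only, so you must move $\bar z G_h$ to the right-hand side and control it locally uniformly in $|z|$ --- which is again exactly the missing estimate. Your opening heuristic that ``the shift $z$ can only improve the decay'' is a statement about the continuous kernel with no direct discrete counterpart. A workable proof stays entirely at the level of weighted $L^2$ quantities: run the energy/superapproximation argument of Section~4 on $G_h$ itself (the data term is handled by Lemma~\ref{lemma:sigma_delta}), combine it with the duality between $\|G_h\|_{L^3}$ and the $L^{3/2}\to L^\infty$ mapping property of the resolvent, and compare with the continuous resolvent Green's function; no pointwise bounds on $G_h$ are needed, and this is the route taken in \cite{LeykekhmanD_VexlerB_2015c}.
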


\begin{lemma}\label{lemma:resolvent_l32_linf}
Let $w_h \in \mathbb{V}_h$ be the solution of
\[
z(w_h,\varphi)-(\na w_h,\na \varphi)=(f,\varphi), \quad \forall \varphi\in \mathbb{V}_h
\]
for some $f \in L^{\thalf}(\Omega)$. There exists a constant $C>0$ such that
\[
\norm{w_h}_{L^\infty(\Omega)} \le C \lh^\frac{1}{3} \norm{f}_{L^\thalf(\Omega)}.
\]
\end{lemma}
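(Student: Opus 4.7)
The plan is to prove this by a duality argument using the regularized Green's function $G_h$ defined in equation \eqref{eq: weak form zG_h}, combined with the $L^3$ bound on $G_h$ from Lemma \ref{lemma:G_h_L_3}. This is a standard pointwise-via-Green's-function strategy, and the whole point of proving it this way is to transfer the logarithmic factor into the sharp exponent $1/3$ that comes out of Lemma \ref{lemma:G_h_L_3}.

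First, I would fix an arbitrary point $x_0 \in \bar\Omega$ and let $\tilde\delta = \tilde\delta_{x_0}$ be the associated smoothed delta function from \eqref{eq: definition delta}. Since $w_h$ is a polynomial of degree $r$ on the cell $\tau_{x_0}$ and $\tilde\delta$ is supported there (and is real-valued), the reproducing property gives
\[
w_h(x_0) = (w_h, \tilde\delta)_\Omega.
\]
Next I would use $G_h = G_h^{x_0}(\cdot,\bar z) \in \mathbb{V}_h$ as a test function to connect this expression with $f$. Testing the defining equation of $w_h$ with $\varphi = G_h$ yields
\[
z(w_h, G_h) - (\nabla w_h, \nabla G_h) = (f, G_h),
\]
while testing the adjoint equation \eqref{eq: weak form zG_h} for $G_h$ with $\varphi = w_h$ yields
\[
z(w_h, G_h) - (\nabla w_h, \nabla G_h) = (w_h, \tilde\delta).
\]
Subtracting these identities produces the representation formula $w_h(x_0) = (f, G_h)_\Omega$.

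To finish, I would apply Hölder's inequality in the dual pair $L^{3/2}$/$L^3$ and invoke Lemma \ref{lemma:G_h_L_3}:
\[
|w_h(x_0)| = |(f, G_h)_\Omega| \le \|f\|_{L^{3/2}(\Omega)} \|G_h\|_{L^3(\Omega)} \le C \lh^{\frac{1}{3}} \|f\|_{L^{3/2}(\Omega)},
\]
with a constant $C$ independent of $h$, $z$, and $x_0$ (uniformity in $z \in \mathbb{C}\setminus \Sigma_\gamma$ being implicit in the statement of Lemma \ref{lemma:G_h_L_3}, which we are allowed to use). Taking the supremum over $x_0$ (noting that $|w_h|$, being piecewise polynomial, attains its maximum at some interior point of some cell where the reproducing property applies) gives the claimed bound.

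There is no real obstacle here: the nontrivial content has already been invested in establishing $\|G_h\|_{L^3(\Om)} \le C\lh^{1/3}$ in Lemma \ref{lemma:G_h_L_3}. The only mild subtlety is the complex-valued setting, which causes no trouble because $\tilde\delta$ is real and the inner product identities are symmetric between the primal and adjoint equations.
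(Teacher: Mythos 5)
Your proposal is correct and follows essentially the same route as the paper: the representation $w_h(x_0) = z(w_h,G_h)-(\nabla w_h,\nabla G_h) = (f,G_h)$ obtained by testing the primal equation with $G_h$ and the adjoint equation \eqref{eq: weak form zG_h} with $w_h$, followed by H\"older in the $L^{3/2}$--$L^3$ pairing and Lemma~\ref{lemma:G_h_L_3}. The paper's proof is just a terser version of yours, and your remarks on the complex-valued setting and uniformity in $x_0$ and $z$ are accurate.
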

\begin{proof}
There holds
$$
w_h(x_0) = z(w_h,G_h)-(\na w_h,\na G_h)=(f,G_h).
$$
Hence,
\[
\abs{w_h(x_0)} = \abs{(f,G_h)} \le \norm{f}_{L^\thalf(\Omega)} \, \|G_h\|_{L^3(\Om)}.
\]
Applying Lemma~\ref{lemma:G_h_L_3} we obtain the result.
\end{proof}

\begin{lemma}\label{lemma:resolvent_l3}
Let $v_h \in \mathbb{V}_h$ be the solution of
\[
z(v_h,\varphi)-(\na v_h,\na \varphi)=(f,\varphi), \quad \forall \varphi\in \mathbb{V}_h,
\]
and $f\in L^1(\Om)$.
There exists a constant $C>0$ such that
\[
\norm{v_h}_{L^3(\Omega)} \le C \lh^\frac{1}{3} \norm{f}_{L^1(\Omega)}.
\]
\end{lemma}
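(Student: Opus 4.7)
The plan is to prove this $L^3$ resolvent estimate by a straightforward duality argument that reduces it to the $L^\infty$-$L^{3/2}$ estimate of Lemma \ref{lemma:resolvent_l32_linf}.

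First I would use $L^3$-$L^{3/2}$ duality to write
\[
\norm{v_h}_{L^3(\Om)} = \sup_{g} \frac{\abs{(v_h,g)}}{\norm{g}_{L^{3/2}(\Om)}},
\]
where the supremum is over all nonzero $g \in L^{3/2}(\Om)$. For a fixed such $g$, I would then introduce an adjoint discrete resolvent problem: let $w_h \in \mathbb{V}_h$ be the unique solution of
\[
z(\varphi,w_h)-(\na \varphi,\na w_h)=(\varphi,g) \quad \forall \varphi \in \mathbb{V}_h.
\]
Taking complex conjugates of both sides (and using that $(u,v)_\Omega = \overline{(v,u)_\Omega}$) converts this into $\bar{z}(w_h,\varphi)-(\na w_h,\na \varphi) = (g,\varphi)$, i.e.\ exactly the type of equation handled by Lemma~\ref{lemma:resolvent_l32_linf}. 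Since $\Sigma_\gamma$ is symmetric about the real axis, $\bar z \in \C \setminus \Sigma_\gamma$ whenever $z \in \C \setminus \Sigma_\gamma$, so Lemma~\ref{lemma:resolvent_l32_linf} yields
\[
\norm{w_h}_{L^\infty(\Om)} \le C\lh^{\frac{1}{3}} \norm{g}_{L^{3/2}(\Om)}.
\]

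Next I would link $v_h$ and $w_h$ by testing each equation with the other: testing the equation for $v_h$ with $\varphi=w_h$ gives $z(v_h,w_h)-(\na v_h,\na w_h)=(f,w_h)$, while testing the equation for $w_h$ with $\varphi=v_h$ gives $z(v_h,w_h)-(\na v_h,\na w_h)=(v_h,g)$. Subtracting these identities shows the duality relation
\[
(v_h,g) = (f,w_h),
\]
so H\"older's inequality and the bound on $\|w_h\|_{L^\infty(\Om)}$ give
\[
\abs{(v_h,g)} \le \norm{f}_{L^1(\Om)} \norm{w_h}_{L^\infty(\Om)} \le C\lh^{\frac{1}{3}} \norm{f}_{L^1(\Om)} \norm{g}_{L^{3/2}(\Om)}.
\]
Dividing by $\|g\|_{L^{3/2}(\Om)}$ and taking the supremum over $g$ yields the claimed estimate.

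There is no real obstacle here: the only subtlety is keeping track of complex conjugation when passing to the adjoint equation, and verifying that the class $\C \setminus \Sigma_\gamma$ is preserved under $z \mapsto \bar z$, which is immediate from the definition of $\Sigma_\gamma$. Well-posedness of the adjoint problem on $\mathbb{V}_h$ for $g \in L^{3/2}(\Om)$ follows from the fact that $\mathbb{V}_h \subset L^3(\Om)$, making $(\varphi,g)$ a bounded antilinear functional on $\mathbb{V}_h$.
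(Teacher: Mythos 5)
Your proof is correct and follows essentially the same duality route as the paper: pair $v_h$ with an adjoint discrete resolvent solution and invoke Lemma~\ref{lemma:resolvent_l32_linf}. The only cosmetic difference is that the paper tests against the single extremal dual datum $g = v_h\abs{v_h}$, obtaining $\norm{v_h}_{L^3(\Om)}^3 \le C\lh^{1/3}\norm{f}_{L^1(\Om)}\norm{v_h}_{L^3(\Om)}^2$ and cancelling, whereas you take the supremum over all $g \in L^{3/2}(\Om)$; the two are interchangeable.
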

\begin{proof}
We consider a dual solution $w_h \in \mathbb{V}_h$ defined by
\[
z(\varphi, w_h)-(\na \varphi,\na w_h)=(\varphi,v_h|v_h|), \quad \forall \varphi\in \mathbb{V}_h.
\]
There holds
\[
\norm{v_h}_{L^3(\Omega)}^3 =z(v_h,w_h)-(\na v_h,\na w_h)= (f,w_h) \le \norm{f}_{L^1(\Omega)} \norm{w_h}_{L^\infty(\Omega)}.
\]
By Lemma~\ref{lemma:resolvent_l32_linf} that also holds for the adjoint problem,  we have
\[
\norm{w_h}_{L^\infty(\Omega)} \le C \lh^{\frac{1}{3}} \norm{v_h|v_h|}_{L^\thalf(\Omega)} \le C \lh^{\frac{1}{3}} \norm{v_h}^2_{L^3(\Omega)}.
\]
Thus, we get
\[
\norm{v_h}_{L^3(\Omega)}^3 \le C \lh^{\frac{1}{3}}  \norm{f}_{L^1(\Omega)} \norm{v_h}^2_{L^3(\Omega)}.
\]
Canceling $\norm{v_h}^2_{L^3(\Omega)}$ completes the proof.
\end{proof}

With these results we proceed with the proof of Theorem~\ref{thm:weightedResolvent}  for $N=3$.
\begin{proof}
For an arbitrary $\chi \in \mathbb{V}_h$ we define
\[
u_h = (z+\Delta_h)^{-1} \chi.
\]
or equivalently
\begin{equation}\label{eq: weak form zu_h}
z(u_h,\varphi)-(\na u_h,\na \varphi)=(\chi,\varphi), \quad \forall \varphi\in \mathbb{V}_h.
\end{equation}
To estimate $ \|\sigma^{\frac{3}{2}}u_h\|$ we consider the expression
\begin{equation}\label{eq: expression from Thomee book}
\|\sigma^{\frac{3}{2}}\na u_h\|^2=( \na(\sigma^3u_h),\na u_h)-3(\sigma^2\na\sigma u_h, \na u_h).
\end{equation}
By taking $\varphi=-P_h(\sigma^3u_h)$ in \eqref{eq: weak form zu_h} and adding to \eqref{eq: expression from Thomee book}, we obtain
\begin{equation}\label{eq: estimates for z u_h with F}
-z\|\sigma^{\frac{3}{2}}u_h\|^2+\|\sigma^{\frac{3}{2}}\na u_h\|^2=F,
\end{equation}
where
$$
F=F_1+F_2+F_3:=-(P_h(\sigma^3u_h),\chi)+(\na(\sigma^3u_h-P_h(\sigma^3u_h)),\na u_h)-3(\sigma^2\na\sigma u_h, \na u_h).
$$
Since $\gamma\le \abs{\arg{z}}\le \pi$, this equation is of the form
$$
e^{i\alpha}a + b = f,\quad \text{with}\quad a,b > 0,\quad 0\le |\alpha|\le \pi-\gamma,
$$
by multiplying it by $e^{-\frac{i\alpha}{2}}$ and taking real parts, we have
$$
a + b \le \left(\cos\left(\frac{\alpha}{2}\right)\right)^{-1}|f| \le \left(\sin\left(\frac{\gamma}{2}\right)\right)^{-1}|f| = C_\gamma|f|.
$$
From \eqref{eq: estimates for z u_h with F} we therefore conclude
$$
\abs{z}\|\sigma^{\frac{3}{2}}u_h\|^2+\|\sigma^{\frac{3}{2}}\na u_h\|^2\le C_\gamma|F|,\quad\text{for}\ z\in \mathbb{C}\setminus \Sigma_{\gamma}.
$$
Using the Cauchy-Schwarz inequality and the arithmetic-geometric mean inequality we obtain,
$$
|F_1| = \abs{(\sigma^3 u_h,\chi)} \le \|\sigma^{\frac{3}{2}}u_h\|\|\sigma^{\frac{3}{2}}\chi\|\le CC_{\gamma}\abs{z}^{-1}\|\sigma^{\frac{3}{2}}\chi\|^2+\frac{\abs{z}}{2C_\gamma}\|\sigma^{\frac{3}{2}}u_h\|^2.
$$
To estimate $F_2$ we use Lemma \ref{lemma:super_ih_ph}, the Cauchy-Schwarz and the arithmetic-geometric mean inequalities,
$$
|F_2|\le \|\sigma^{-\frac{3}{2}}\na(\sigma^3u_h-P_h(\sigma^3u_h))\|\|\sigma^{\frac{3}{2}}\na u_h\|\le \frac{1}{4C_\gamma}\|\sigma^{\frac{3}{2}}\na u_h\|^2 + CC_\gamma\|\sigma^{\frac{1}{2}}u_h\|^2.
$$
Finally, using the properties of $\sigma$, we obtain
$$
|F_3|\le C\|\sigma^{\frac{1}{2}} u_h\|\|\sigma^{\frac{3}{2}}\na u_h\|\le  \frac{1}{4C_\gamma}\|\sigma^{\frac{3}{2}}\na u_h\|^2 + CC_\gamma\|\sigma^{\frac{1}{2}}u_h\|^2.
$$
Combining the estimates for $F_i's$ and kicking back, we obtain
\begin{equation}\label{eq: sigma3/2 Gh after first step}
\abs{z}\|\sigma^{\frac{3}{2}}u_h\|^2+\|\sigma^{\frac{3}{2}}\na u_h\|^2\le C\left(\abs{z}^{-1}\norm{\sigma^\thalf \chi}^2+\|\sigma^{\frac{1}{2}}u_h\|^2\right).
\end{equation}
Thus, in order to establish the desired weighted resolvent estimate, we need to show
\begin{equation}\label{eq: desired estimate for sigma1/2 uh}
\|\sigma^{\frac{1}{2}}u_h\|^2\le C\lh^2\abs{z}^{-1}\norm{\sigma^\thalf \chi}^2.
\end{equation}
To accomplish that, we consider the expression
$$
-z\|\sigma^{\frac{1}{2}}u_h\|^2+\|\sigma^{\frac{1}{2}}\na u_h\|^2=-z(u_h, \sigma u_h)+(\na u_h, \na(\sigma u_h))-(\na u_h, \na\sigma u_h).
$$
Testing~\eqref{eq: weak form zu_h} with $\varphi=P_h(\sigma u_h)$ we obtain similarly as above
$$
\abs{z}\|\sigma^{\frac{1}{2}}u_h\|^2+\|\sigma^{\frac{1}{2}}\na u_h\|^2\le C_{\gamma}|f|, \quad\text{for}\quad z\in \C \setminus \Sigma_{\gamma},
$$
where
$$
f=f_1+f_2+f_3:=-(P_h(\sigma u_h),\chi)+(\na(\sigma u_h-P_h(\sigma u_h)),\na u_h)-(\na\sigma u_h,\na u_h).
$$
Using the Cauchy-Schwarz inequality and the arithmetic-geometric mean inequality, we obtain
$$
|f_1| = \abs{(\sigma u_h,\chi)} \le \|\sigma^{-\frac{1}{2}}u_h\|\|\sigma^{\frac{3}{2}}\chi\|\le \half \|\sigma^{-\frac{1}{2}}u_h\|^2 +  \half \norm{\sigma^\thalf \chi}^2.
$$
To estimate $f_2$ we use Lemma \ref{lemma:super_ih_ph}, the Cauchy-Schwarz and the arithmetic-geometric mean inequalities,
$$
|f_2|\le \|\sigma^{-\frac{1}{2}}\na(\sigma u_h-P_h(\sigma u_h))\|\|\sigma^{\frac{1}{2}}\na u_h\|\le \frac{1}{4C_\gamma}\|\sigma^{\frac{1}{2}}\na u_h\|^2 + CC_\gamma\|\sigma^{-\frac{1}{2}}u_h\|^2.
$$
Finally, using the properties of $\sigma$, we obtain
$$
|f_3|\le C\|\sigma^{-\frac{1}{2}} u_h\|\|\sigma^{\frac{1}{2}}\na u_h\|\le  \frac{1}{4C_\gamma}\|\sigma^{\frac{1}{2}}\na u_h\|^2 + CC_\gamma\|\sigma^{-\frac{1}{2}}u_h\|^2.
$$
Combining estimates for $f_i's$ and kicking back, we obtain
\begin{equation}\label{eq: sigma1/2 uh after second step}
\abs{z}\|\sigma^{\frac{1}{2}}u_h\|^2+\|\sigma^{\frac{1}{2}}\na u_h\|^2 \le C \left(\|\sigma^{-\frac{1}{2}}u_h\|^2 + \norm{\sigma^\thalf \chi}^2\right).
\end{equation}
To estimate $\|\si^{-\frac{1}{2}} u_h\|$ we use Lemma \ref{lemma: from weighted L2 to weighted in H1} with $\alpha=\beta=-\frac{1}{2}$ and $p=3$, to obtain
\begin{equation}\label{eq: from L2 using lemma resolvent}
\|\si^{-\frac{1}{2}} u_h\|\le C\|u_h\|^{\frac{1}{2}}_{L^3(\Om)}\|\na u_h\|^{\frac{1}{2}}_{L^{\frac{3}{2}}(\Om)}.
\end{equation}
Using Lemma~\ref{lemma:resolvent_l3}, we have
\[
\|u_h\|_{L^3(\Om)} \le C \lh^\frac{1}{3} \norm{\chi}_{L^1(\Omega)} \le C \lh^\frac{1}{3} \norm{\sigma^{-\thalf}}\norm{\sigma^\thalf \chi} \le C \lh^\frac{5}{6} \norm{\sigma^\thalf \chi}.
\]
To estimate $\|\na u_h\|_{L^{\frac{3}{2}}(\Om)}$ we proceed by the H\"older inequality
\begin{equation}\label{eq: estimates grad u_h in L3/2 resolvent}
\|\na u_h\|_{L^{\frac{3}{2}}(\Om)}\le C\lh^{\frac{1}{6}}\|\si^{\frac{1}{2}}\na u_h\|_{L^2(\Om)}.
\end{equation}
Thus, using \eqref{eq: sigma1/2 uh after second step} and the above estimates, we have
\[
\begin{aligned}
\abs{z}\|\sigma^{\frac{1}{2}}u_h\|^2+\|\sigma^{\frac{1}{2}}\na u_h\|^2&\le C\left(\| u_h\|_{L^3(\Om)}\|\na u_h\|_{L^{\frac{3}{2}}(\Om)}+\norm{\sigma^\thalf \chi}^2\right)\\
&\le C\left(\lh\|\sigma^{\frac{1}{2}}\na u_h\|\norm{\sigma^\thalf \chi}+\norm{\sigma^\thalf \chi}^2\right)\\
&\le C\lh^2\norm{\sigma^\thalf \chi}^2+\frac{1}{2}\|\sigma^{\frac{1}{2}}\na u_h\|^2.
\end{aligned}
\]
Kicking back $\|\si^{\frac{1}{2}}\na u_h\|^2$, we finally obtain
$$
 \|\si^{\frac{1}{2}} u_h\|^2\le C\lh^2\abs{z}^{-1} \norm{\sigma^\thalf \chi}^2,
$$
which shows \eqref{eq: desired estimate for sigma1/2 uh} and hence the theorem.
\end{proof}

\section{Maximal parabolic and smoothing estimates}\label{sec: max and smoothing}

In this section we state some smoothing and stability results for homogeneous and inhomogeneous problems that are central in establishing our main results.
Since we apply the following results for different norms on $V_h$, namely, for $L^p(\Om)$ and weighted $L^2(\Om)$ norms, we state them for a general norm $\vertiii{\cdot}$.

Let $\vertiii{\cdot}$ be a norm on $V_h$ (extended in a straightforward way to a norm on $\mathbb{V}_h$) such that for some $\gamma\in(0,\frac{\pi}{2})$ the following resolvent estimate holds,
\begin{equation}\label{eq: resolvent in Banach space}
\vertiii{(z+\Delta_h)^{-1}\chi} \le \frac{M_h}{\abs{z}} \vertiii{\chi},\quad\text{for}\ z\in \mathbb{C}\setminus \Sigma_{\gamma},
\end{equation}
for all $\chi \in \mathbb{V}_h$, where $\Sigma_{\gamma}$ is defined in \eqref{eq: definition of sigma} and
 the constant $M_h$ is independent of $z$.

This assumption is fulfilled for $\vertiii{\cdot}=\norm{\cdot}_{L^p(\Omega)}$, $1 \le p \le \infty$, with a constant $M_h \le C$ independent of $h$, see~\cite{LiB_SunW_2015a}, and for $\vertiii{\cdot}=\norm{\sigma^{\frac{N}{2}}\cdot}_{L^2(\Omega)}$ with $M_h \le C \lh$, see Theorem~\ref{thm:weightedResolvent}.

\subsection{Smoothing estimates for the homogeneous problem in Banach spaces}

First, we consider the homogeneous heat equation~\eqref{eq: heat equation}, i.e.  with $f=0$ and its discrete approximation $u_{kh} \in \Xkh$ defined by
\begin{equation}\label{eq: dg(r) homogeneous}
B(u_{kh},\varphi_{kh}) = (u_0,\varphi_{kh,0}^+) \quad \forall \varphi_{kh} \in \Xkh.
\end{equation}
The first result is a smoothing type estimate, see~\cite[Theorem~13]{LeykekhmanD_VexlerB_2015b}, cf. also~\cite[Thmeorem~5.1]{ErikssonK_JohnsonC_LarssonS_1998a} for the case of the $L^2$ norm.
\begin{lemma}[Fully discrete homogeneous smoothing estimate]\label{lemma: homogeneous smoothing dG_r fully discrete}
Let $\vertiii{\cdot}$ be a norm on $V_h$ fulfilling the resolvent estimate~\eqref{eq: resolvent in Banach space}. Let $u_{kh}$ be the solution  of \eqref{eq: dg(r) homogeneous}. Then, there exists a constant $C$ independent of $k$ and $h$ such that
$$
\sup_{t\in I_m}\vertiii{\pa_t u_{kh}(t)}+\sup_{t\in I_m}\vertiii{\Delta_h u_{kh}(t)}+k_m^{-1}\vertiii{[u_{kh}]_{m-1}}\le \frac{CM_h}{t_m}\vertiii{P_h u_0},
$$
for $m=1,2,\dots,M$. For $m=1$ the jump term is understood as $[u_{kh}]_0 = u_{kh,0}^+-P_h u_0$.
\end{lemma}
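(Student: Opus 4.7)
The plan is to represent $u_{kh}$ on $I_m$ via a Dunford--Taylor type contour integral of the resolvent $(z+\Delta_h)^{-1}$ and then apply the hypothesis \eqref{eq: resolvent in Banach space} pointwise on the contour. First I would recast the homogeneous dG($q$) scheme \eqref{eq: dg(r) homogeneous} as a local problem on $I_m$: given $U_{m-1}:=u_{kh,m-1}^-$ (with $U_0:=P_h u_0$), the restriction $u_{kh}|_{I_m}\in \mathcal P_q(I_m;V_h)$ is the unique polynomial such that
\begin{equation*}
\int_{I_m}(\partial_t u_{kh}-\Delta_h u_{kh},\varphi)\,dt+(u_{kh,m-1}^+,\varphi_{m-1}^+)=(U_{m-1},\varphi_{m-1}^+)\quad\forall\varphi\in\mathcal P_q(I_m;V_h).
\end{equation*}
This is a finite-dimensional linear system of ODEs in $V_h$, so it can be solved by operational calculus in $-\Delta_h$: there exist a rational stability function $r(\zeta)$ of dG($q$) and a function $w(s,\zeta)$, polynomial in $s$ and rational in $\zeta$, such that $U_m=r(-k_m\Delta_h)U_{m-1}$ and $u_{kh}(t)=w(\tfrac{t-t_{m-1}}{k_m},-k_m\Delta_h)U_{m-1}$ for $t\in I_m$. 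The dG($q$) scheme is strongly $A$-stable (in fact $L$-stable), so $|r(\zeta)|\le C(1+|\zeta|)^{-1}$ on $\mathbb C\setminus\Sigma_\gamma$, and $|w(s,\zeta)|$, $|\partial_s w(s,\zeta)|$, $|(w(0,\zeta)-1)/\zeta|$ are uniformly bounded there for $s\in[0,1]$.

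Iterating the update and inserting the Dunford--Taylor representation on a contour $\Gamma=\partial\Sigma_{\gamma'}$, $\gamma'\in(\gamma,\pi/2)$, oriented to enclose the spectrum of $-\Delta_h$, yields
\begin{equation*}
u_{kh}(t)=\frac{1}{2\pi i}\int_\Gamma w\!\left(\tfrac{t-t_{m-1}}{k_m},zk_m\right)\prod_{j=1}^{m-1}r(zk_j)\,(z+\Delta_h)^{-1}P_h u_0\,dz,\qquad t\in I_m.
\end{equation*}
Differentiation in $t$ introduces a bounded kernel factor $k_m^{-1}\partial_s w(\cdot,zk_m)$, while $\Delta_h$ is applied through the algebraic identity $-\Delta_h(z+\Delta_h)^{-1}=-\Id+z(z+\Delta_h)^{-1}$, the $\Id$-piece being eliminated by a standard contour-closing argument. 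The jump $[u_{kh}]_{m-1}$ admits the same representation with the $w$-factor replaced by $w(0,zk_m)-1$, and combining this with the prefactor $k_m^{-1}$ again produces a bounded scalar contribution via the property of $w$ recorded above.

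Applying the hypothesis $\vertiii{(z+\Delta_h)^{-1}\chi}\le M_h|z|^{-1}\vertiii\chi$ on $\Gamma$ reduces each of the three quantities on the left-hand side of the lemma to bounding a scalar integral dominated, up to a uniform constant, by
\begin{equation*}
M_h\vertiii{P_h u_0}\int_\Gamma\prod_{j=1}^{m-1}\abs{r(zk_j)}\,|dz|.
\end{equation*}
Using the $L$-stability bound $|r(zk_j)|\le C(1+|z|k_j)^{-1}$, the mesh-ratio assumption (ii), and $\sum_{j=1}^{m-1}k_j\sim t_m$, the telescoping product is integrated along $\Gamma$ to give the desired $C/t_m$ bound, which concludes the proof.

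The main obstacle is controlling the product kernel uniformly on $\Gamma$ independently of the number of preceding steps: one must convert $\prod_{j<m}|r(zk_j)|$ into an integrable bound of order $1/t_m$ on $\Gamma$. This is precisely where the bounded mesh-ratio assumption (ii) and the $L$-stability (decay at infinity) of the dG($q$) stability function are essential; the rest of the argument is purely scalar and independent of the specific norm on $V_h$.
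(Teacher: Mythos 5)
The paper does not actually prove this lemma; it is imported verbatim from \cite[Theorem~13]{LeykekhmanD_VexlerB_2015b} (cf.\ also \cite{ErikssonK_JohnsonC_LarssonS_1998a} for the $L^2$ case), and your argument --- reformulating the dG($q$) step as $U_m=r(-k_m\Delta_h)U_{m-1}$ with internal kernel $w$, inserting the Dunford--Taylor representation on $\partial\Sigma_{\gamma'}$, invoking the sectorial resolvent bound~\eqref{eq: resolvent in Banach space} together with the $L$-stability decay of $r$ and the mesh conditions to integrate the telescoping product to $C/t_m$ --- is precisely the argument used there, so the approach matches. The one point to tighten is the case $m=1$ (and, more generally, convergence of the contour integrals at infinity): the damping product is then empty, and mere boundedness of $w(s,\zeta)$ does not make $\int_\Gamma \abs{w(s,zk_1)}\,\abs{z}^{-1}\,\abs{dz}$ finite, so you need the stronger decay $w(s,\zeta)=O(\abs{\zeta}^{-1})$ as $\zeta\to\infty$ in the sector (which the subdiagonal rational functions of dG($q$) do provide) or a separate treatment of the first interval.
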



For the proofs of Theorem~\ref{thm:global_best_approx} and Theorem~\ref{thm:local_best_approx}, we will need an additional stability result, which is also formulated for a general norm $\vertiii{\cdot}$ fulfilling~\eqref{eq: resolvent in Banach space}.

\begin{lemma}\label{lemma: smoothing L1 in time}
Let $\vertiii{\cdot}$ be a norm on $V_h$ fulfilling the resolvent estimate~\eqref{eq: resolvent in Banach space}. Let $u_{kh}$ be the solution  of \eqref{eq: dg(r) homogeneous}.
Then there exists a constant $C$ independent of $k$ and $h$ such that
$$
\sum_{m=1}^M\left(\int_{I_m}\vertiii{\pa_t u_{kh}(t)}dt+\int_{I_m}\vertiii{\Delta_h u_{kh}(t)}dt+\vertiii{[u_{kh}]_{m-1}}\right)\le CM_h\lk\vertiii{P_h u_0}.
$$
For $m=1$ the jump term is understood as $[u_{kh}]_0 = u_{kh,0}^+-P_h u_0$.
\end{lemma}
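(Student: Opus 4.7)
The strategy is to reduce everything to the pointwise bound already given by the previous Lemma (Fully discrete homogeneous smoothing estimate) and then sum in $m$. The previous lemma delivers, on each interval $I_m$,
\[
\sup_{t\in I_m}\vertiii{\pa_t u_{kh}(t)}+\sup_{t\in I_m}\vertiii{\Delta_h u_{kh}(t)}+k_m^{-1}\vertiii{[u_{kh}]_{m-1}}\le \frac{CM_h}{t_m}\vertiii{P_h u_0},
\]
so, bounding the integral of a function by $k_m$ times its supremum and multiplying the jump bound by $k_m$, one immediately gets
\[
\int_{I_m}\vertiii{\pa_t u_{kh}(t)}\,dt+\int_{I_m}\vertiii{\Delta_h u_{kh}(t)}\,dt+\vertiii{[u_{kh}]_{m-1}}\le \frac{CM_h k_m}{t_m}\vertiii{P_h u_0}.
\]

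The remaining task is to control $\sum_{m=1}^M k_m/t_m$ by $C\lk=C\ln(T/k)$. First I would split off $m=1$, for which $k_1/t_1=1$. For $m\ge 2$, since $1/t$ is decreasing, I would use the comparison $k_m/t_m\le \int_{t_{m-1}}^{t_m} dt/t$ and collapse the sum telescopically to $\ln(T/t_1)=\ln(T/k_1)$. Hence
\[
\sum_{m=1}^M \frac{k_m}{t_m}\le 1+\ln\!\left(\frac{T}{k_1}\right).
\]

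To translate $\ln(T/k_1)$ into $\lk$, I would invoke the time-mesh condition (i), namely $k_1\ge k_{\min}\ge ck^\beta$. This gives $\ln(T/k_1)\le \ln(T/c)+\beta\ln(1/k)\le C\lk$, where I use that $\lk\ge \ln 4>0$ (from condition (iii) $k\le T/4$) to absorb the additive constants. Combining this with the bound on each interval and multiplying by $CM_h\vertiii{P_hu_0}$ then yields the asserted estimate. The case $m=1$ of the jump term is handled exactly as in the preceding lemma, where $[u_{kh}]_0$ is interpreted as $u_{kh,0}^+-P_hu_0$ and is covered by the same pointwise bound.

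The main (minor) obstacle is the first interval $m=1$, since there the pointwise estimate degenerates (the factor $1/t_m$ is not integrable near $0$); this is why the integral-comparison argument must start at $m=2$ and why the mesh condition $k_{\min}\ge ck^\beta$ is needed to convert $\ln(T/k_1)$ into $\lk$. All other steps are straightforward manipulations of the previous lemma.
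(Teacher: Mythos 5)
Your proof is correct and follows essentially the same route as the paper: bound each integral by $k_m$ times the supremum from Lemma~\ref{lemma: homogeneous smoothing dG_r fully discrete} and then sum, using $\sum_{m=1}^M k_m/t_m \le C\lk$. The paper simply asserts this last summation bound, whereas you supply its (correct) justification via the integral comparison for $m\ge 2$ and the mesh conditions (i) and (iii) to absorb $\ln(T/k_1)$ into $C\lk$.
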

\begin{proof}
Using the above smoothing result, we have
$$
\begin{aligned}
&\sum_{m=1}^M\left(\int_{I_m}\vertiii{\pa_t u_{kh}(t)}dt+\int_{I_m}\vertiii{\Delta_h u_{kh}(t)}dt+\vertiii{[u_{kh}]_{m-1}}\right)\\
&\le\sum_{m=1}^M k_m\left(\sup_{t\in I_m}\vertiii{\pa_t u_{kh}(t)}+\sup_{t\in I_m}\vertiii{\Delta_h u_{kh}(t)}+k_m^{-1}\vertiii{[u_{kh}]_{m-1}}\right)\\
& \le C M_h \sum_{m=1}^M \frac{k_m}{t_m}\vertiii{P_h u_{0}}\le C M_h \lk\vertiii{P_h u_{0}},
\end{aligned}
$$
where in the last step we used that $\sum_{m=1}^M \frac{k_m}{t_m}\le C\lk$.
\end{proof}


\subsection{Discrete maximal parabolic estimates for the inhomogeneous problem in Banach spaces}

Now, we consider the inhomogeneous heat equation \eqref{eq: heat equation},  with $u_0=0$ and its discrete approximation $u_{kh} \in \Xkh$ defined by
\begin{equation}\label{eq: dGr nonhomogeneous equation fully}
B(u_{kh},\varphi_{kh})=(f,\varphi_{kh}),\quad \forall \varphi_{kh}\in \Xkh.
\end{equation}
The following discrete maximal parabolic regularity result is taken from~\cite[Theorem 14]{LeykekhmanD_VexlerB_2015b}.
\begin{lemma}[Discrete maximal parabolic regularity]\label{lemma: fully discrete_maximal_parabolic}
Let $\vertiii{\cdot}$ be a norm on $V_h$ fulfilling the resolvent estimate~\eqref{eq: resolvent in Banach space} and let $1 \le s \le \infty$. Let $u_{kh}$ be a solution  of \eqref{eq: dGr nonhomogeneous equation fully}. Then, there exists a constant $C$ independent of $k$ and $h$ such that
\begin{equation*}
\begin{aligned}
\left(\sum_{m=1}^M\int_{I_m}\vertiii{\pa_t u_{kh}(t)}^sdt\right)^{\frac{1}{s}}+\left(\sum_{m=1}^M\int_{I_m}\vertiii{\Delta_h u_{kh}(t)}^sdt\right)^{\frac{1}{s}}&+\left(\sum_{m=1}^Mk_m\vertiii{k_m^{-1}[u_{kh}]_{m-1}}^s\right)^{\frac{1}{s}}\\
&\le C M_h \lk\left(\int_I\vertiii{P_h f(t)}^sdt\right)^{\frac{1}{s}},
\end{aligned}
\end{equation*}
with obvious notation change in the case of $s=\infty$. For $m=1$ the jump term is understood as $[u_{kh}]_0=u_{kh,0}^+$.
\end{lemma}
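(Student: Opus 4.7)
My plan is to follow the standard blueprint for maximal $L^s$-regularity for analytic semigroups, adapted to the dG($q$)-in-time setting and leveraging the homogeneous smoothing bound of Lemma~\ref{lemma: homogeneous smoothing dG_r fully discrete}. I begin with an interval decomposition: write $f = \sum_{j=1}^M f_j$ where $f_j$ is the restriction of $f$ to $I_j$ extended by zero, and by linearity $u_{kh} = \sum_{j=1}^M u_{kh}^{(j)}$, where $u_{kh}^{(j)}\in\Xkh$ solves $B(u_{kh}^{(j)},\varphi_{kh}) = (f_j,\varphi_{kh})$ for all $\varphi_{kh}\in\Xkh$. By causality of the dG scheme, $u_{kh}^{(j)}$ vanishes on $I_1,\dots,I_{j-1}$, responds to $f_j$ on $I_j$, and on $I_{j+1},\dots,I_M$ evolves according to the homogeneous discrete problem with ``initial value'' $u_{kh,j}^{(j),-}$ prescribed at $t_j$.

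For the diagonal ($j=m$) contribution on $I_j$ itself, I would establish the one-step bound
\[
\int_{I_j}\vertiii{\pa_t u_{kh}^{(j)}}^s dt + \int_{I_j}\vertiii{\De_h u_{kh}^{(j)}}^s dt + k_j\vertiii{k_j^{-1}[u_{kh}^{(j)}]_{j-1}}^s + k_j^{1-s}\vertiii{u_{kh,j}^{(j),-}}^s \le C M_h^s \int_{I_j}\vertiii{P_h f}^s dt,
\]
which follows by decomposing $-\De_h$ spectrally and applying the resolvent bound \eqref{eq: resolvent in Banach space} componentwise to the $(q+1)$-dimensional local system governing $u_{kh}^{(j)}|_{I_j}$. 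Equivalently, one writes the local stability function as a Dunford-Taylor contour integral of the resolvent around the spectrum of $-k_j\De_h$; the crucial feature is that only a single factor $M_h$ enters here.

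For off-diagonal terms $m>j$, the component $u_{kh}^{(j)}$ is homogeneous on $(t_j,T)$, so a translation of Lemma~\ref{lemma: homogeneous smoothing dG_r fully discrete} yields
\[
\sup_{t\in I_m}\vertiii{\pa_t u_{kh}^{(j)}} + \sup_{t\in I_m}\vertiii{\De_h u_{kh}^{(j)}} + k_m^{-1}\vertiii{[u_{kh}^{(j)}]_{m-1}} \le \frac{C M_h}{t_m - t_j}\vertiii{u_{kh,j}^{(j),-}}.
\]
Combining with the diagonal bound on $\vertiii{u_{kh,j}^{(j),-}}$ and using the triangle inequality in $\ell^s$ over $j$, the $m$-th component of the left-hand side of the target inequality is controlled by $CM_h$ times a discrete convolution of the sequence $\bigl(\int_{I_j}\vertiii{P_h f}^s\bigr)^{1/s}$ with the kernel $K(m,j) = k_j/(t_m-t_j)$ (plus a trivial diagonal remainder). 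The mesh condition (ii) together with $k\le T/4$ implies that both the row sums $\sum_j K(m,j)$ and the column sums $\sum_m K(m,j)$ are bounded by $C\lk$, and discrete Young's inequality then delivers the factor $\lk$ uniformly for every $1\le s\le\infty$.

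The main obstacle is the sharp diagonal estimate: proving it with constant exactly $M_h^s$, without producing an extra $\lk$ or $M_h$ that would then be raised to a higher power after convolution. This amounts to $\vertiii{\cdot}$-stability of the local dG($q$) step and requires writing the solution of the local problem as a matrix-valued rational function of $k_j\De_h$ and invoking \eqref{eq: resolvent in Banach space} along a suitable Hankel contour in $\mathbb{C}\setminus\Sigma_\gamma$ surrounding the spectrum; the analyticity guaranteed by \eqref{eq: resolvent in Banach space} is what makes this contour bound succeed. The endpoint cases $s=1,\infty$ of Young's inequality reduce to the row- and column-sum bounds on $K$ already established above.
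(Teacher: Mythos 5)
First, note that the paper does not actually prove this lemma: it is quoted verbatim from \cite[Theorem 14]{LeykekhmanD_VexlerB_2015b}, so there is no in-paper argument to compare yours against. Your outline is nevertheless close in spirit to how such results are proved there (causal decomposition of the source, a local stability estimate for the diagonal step, the homogeneous smoothing estimate for the off-diagonal propagation, and a Schur/Young argument producing the factor $\lk$), so the overall architecture is sound.

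However, there is a concrete gap in the bookkeeping of the constant $M_h$, and it matters for exactly the application this paper needs. Your diagonal estimate costs one factor of $M_h$ (the local dG step is a rational function of $k_j\Delta_h$ and any bound on it in $\vertiii{\cdot}$ must go through the resolvent estimate~\eqref{eq: resolvent in Banach space}; already for dG(0) one has $u_{kh,j}^{(j),-}=(I-k_j\Delta_h)^{-1}k_j\bar f_j$, whose norm bound uses the resolvent at $z=-1/k_j$ and hence carries $M_h$). The off-diagonal propagation via the translated Lemma~\ref{lemma: homogeneous smoothing dG_r fully discrete} costs a second factor of $M_h$. Composing the two, as you propose, yields
\[
\sup_{t\in I_m}\vertiii{\De_h u_{kh}^{(j)}(t)}\le \frac{CM_h}{t_m-t_j}\vertiii{u_{kh,j}^{(j),-}}\le \frac{CM_h^2}{t_m-t_j}\,k_j^{1/s'}\Bigl(\int_{I_j}\vertiii{P_hf}^s\,dt\Bigr)^{1/s},
\]
and after summation you obtain $CM_h^2\lk$, not the asserted $CM_h\lk$. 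For $\vertiii{\cdot}=\norm{\cdot}_{L^p(\Om)}$, where $M_h\le C$, this is harmless; but the proof of Theorem~\ref{thm:local_best_approx} applies the lemma with $\vertiii{\cdot}=\norm{\sigma^{N/2}\cdot}_{L^2(\Om)}$ and $M_h\le C\lh$, where your version would degrade the final estimate to $\lh^2\lk$. The fix is not to compose two separately-estimated operators but to represent the composite map $f|_{I_j}\mapsto \De_h u_{kh}|_{I_m}$ as a \emph{single} Dunford--Taylor contour integral of a product of rational functions of $\De_h$, so that the resolvent bound is invoked only once for the whole kernel; this is essentially how the cited reference obtains the single power of $M_h$. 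Separately, your ``diagonal'' estimate --- which you yourself flag as the main obstacle --- is only asserted, not proved; since it is where the entire operator-calculus content of the lemma resides, the proposal as written is an outline rather than a proof.
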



\begin{remark}\label{remark:M_h}
As mentioned above the assumption~\eqref{eq: resolvent in Banach space} is fulfilled for $\vertiii{\cdot} = \norm{\cdot}_{L^p(\Omega)}$ and any $1\le p\le \infty$ with $M_h \le C$ and for $\vertiii{\cdot}=\norm{\sigma^{\frac{N}{2}}\cdot}_{L^2(\Omega)}$ with $M_h \le C \lh$. Therefore the results of Lemma~\ref{lemma: homogeneous smoothing dG_r fully discrete}, Lemma~\ref{lemma: smoothing L1 in time}, and Lemma~\ref{lemma: fully discrete_maximal_parabolic} are fulfilled for these two choices of norms with the corresponding constants $M_h$.
\end{remark}
\section{Proof of Theorem \ref{thm:global_best_approx}}\label{sec: proofs global results}

Let $\tilde t \in(0,T]$ and let $x_0 \in \Omega$ be an arbitrary but fixed point. Without loss of generality we assume $\tilde t \in (t_{M-1},T]$. We consider two cases: $\tilde t = T$ and $t_{M-1} < \tilde t < T$.

\textbf{Case 1, \boldmath{$\tilde t = T$}:}
To establish our result we will estimate $u_{kh}(T,x_0)$
by using a duality argument. First, we define $g$ to be a solution to the following backward parabolic problem
\begin{equation}\label{eq: heat with dirac initial}
\begin{aligned}
-\pa_t g(t,x)-\Delta g(t,x) &= 0& (t,x)\in \IOm,\;  \\
g(t,x)&=0, &(t,x) \in I\times\pa\Omega, \\
g(T,x)&=\tilde{\delta}_{x_0}, & x \in \Omega,
\end{aligned}
\end{equation}
where $\tilde \delta = \tilde{\delta}_{x_0}$ is the smoothed Dirac function introduced in \eqref{eq: definition delta}.
Let $g_{kh} \in \Xkh$ be the corresponding cG($r$)dG($q$) solution defined by
\begin{equation}\label{eq: discrete heat with dirac initial}
B(\varphi_{kh}, g_{kh})=\varphi_{kh}(T,x_0)\quad \forall \varphi_{kh}\in \Xkh.
\end{equation}
Then using that cG($r$)dG($q$) method is consistent, we have
\begin{equation}\label{eq: starting with B pointwise time}
\begin{aligned}
u_{kh}(T,x_0) &= B(u_{kh}, g_{kh})=B(u, g_{kh})\\
&=-\sum_{m=1}^{M}(u,\pa_t g_{kh})_{\ImOm}+(\na  u, \na g_{kh})_{\IOm}-\sum_{m=1}^{M-1}(u_m,[g_{kh}]_m)_{\Om}+(u(T),g_{kh,M}^-)_\Om\\
&=J_1+J_2+J_3+J_4.
 \end{aligned}
\end{equation}
Using the H\"{o}lder inequality we have
\begin{equation}\label{eq: estimate for J1 pointwise time}
\begin{aligned}
    J_1&\le \sum_{m=1}^{M}\|u\|_{L^\infty(I_m \times \Om)}\|\pa_t g_{kh}\|_{L^1(I_m; L^1(\Om))}\\
&\le \|u\|_{L^\infty(I \times \Om)}\sum_{m=1}^{M}\|\pa_t g_{kh}\|_{L^1(I_m; L^1(\Om))}.
\end{aligned}
\end{equation}
For $J_2$ we obtain using the stability of the Ritz projection in $L^\infty(\Om)$ norm on polygonal and polyhedral domains, see Lemma~\ref{lemma: stablity of Ritz},
\begin{equation}\label{eq: estimate for J2 pointwise time}
\begin{aligned}
    J_2&= (\na  R_hu, \na g_{kh})_{\IOm}=-(R_hu, \Delta_h g_{kh})_{\IOm}\\
&\le \|R_hu\|_{L^\infty(I \times \Om)}\|\Delta_h g_{kh}\|_{L^{1}(I; L^1(\Om))}\\
& \le C \lh \|u\|_{L^\infty(I \times \Om)}\|\Delta_h g_{kh}\|_{L^{1}(I; L^1(\Om))}
\end{aligned}
\end{equation}
For $J_3$ and $J_4$ we obtain
\begin{equation}\label{eq: estimate for J3 pointwise time}
\begin{aligned}
    J_3&\le \sum_{m=1}^{M-1}\|u_m\|_{L^\infty(\Om)}\|[g_{kh}]_m\|_{L^1(\Om)}
 \le \|u\|_{L^\infty(I \times \Om)} \sum_{m=1}^{M-1}\|[g_{kh}]_m\|_{L^1(\Om)},\\
J_4 &\le\|u(T)\|_{L^\infty(\Om)}\|g_{kh,M}^{-}\|_{L^1(\Om)}
\le \|u\|_{L^\infty(I \times \Om)} \|g_{kh,M}^{-}\|_{L^1(\Om)}.
\end{aligned}
\end{equation}
Combining the estimates for $J_1$, $J_2$, $J_3$, and $J_4$ and applying Lemma \ref{lemma: smoothing L1 in time} with $\vertiii{\cdot}=\|\cdot\|_{L^1(\Om)}$ and $M_h \le C$, cf. Remark~\ref{remark:M_h}, we have
\begin{equation*}
\begin{aligned}
|u_{kh}(T,x_0)| &\le C \lh \|u\|_{L^\infty(I \times \Om)} \Biggl(\sum_{m=1}^{M}\|\pa_t g_{kh}\|_{L^1(I_m; L^1(\Om))}+\|\Delta_h g_{kh}\|_{L^{1}(I; L^1(\Om))}\\
&\qquad\qquad\qquad\qquad\qquad\qquad\qquad\qquad\qquad+\sum_{m=1}^{M-1}\|[g_{kh}]_m\|_{L^1(\Om)} + \|g_{kh,M}^{-}\|_{L^1(\Om)}\Biggr)\\
&\le  C\lh\lk\|u\|_{L^\infty(I \times \Om)} \norm{P_h \tilde \delta}_{L^1(\Om)}\\
& \le  C\lh\lk\|u\|_{L^\infty(I \times \Om)},
\end{aligned}
\end{equation*}
where in the last step we used the stability of the $L^2$ projection $P_h$ with respect to the $L^1(\Omega)$ norm,  see, e.\,g., \cite{DouglasDupontWahlbin:1975} and the fact that $\norm{\tilde \delta}_{L^1(\Omega)}\le C$.

Using  that the cG($r$)dG($q$) method is invariant on $\Xkh$, by replacing $u$ and $u_{kh}$ with $u-\chi$ and $u_{kh}-\chi$ for any $\chi\in \Xkh$, and
using the triangle inequality we obtain
\[
\abs{u(T,x_0)-u_{kh}(T,x_0)} \le C \lk \lh \inf_{\chi \in \Xkh} \norm{u-\chi}_{L^\infty(I\times \Om)}.
\]

\textbf{Case 2, \boldmath{$t_{M-1} < \tilde t <T$}:}

In this case we consider the following regularized Green's function
\begin{equation}\label{eq: heat with dirac on the RHS}
\begin{aligned}
-\pa_t \tilde{g}(t,x)-\Delta \tilde{g}(t,x) &= \tilde{\delta}_{x_0}(x)\tilde{\theta}(t)& (t,x)\in \IOm,\;  \\
\tilde{g}(t,x)&=0, &(t,x) \in I\times\pa\Omega, \\
\tilde{g}(T,x)&=0, & x \in \Omega,
\end{aligned}
\end{equation}
where $\tilde{\theta}\in C^1(\bar I)$ is the regularized Delta function in time with properties
\[
\supp \tilde{\theta} \subset (t_{M-1},T), \quad \|\tilde{\theta}\|_{L^1(I_M)}\le C
\]
and
$$
(\tilde{\theta},\varphi_k)_{I_M}=\varphi_k(\tilde{t}),\quad \forall \varphi_k\in \Ppol{q}(I_M).
$$
Let $\tilde{g}_{kh}$ be cG($r$)dG($q$) approximation of $\tilde{g}$, i.e.
\[
B(\varphi_{kh},\tilde g - \tilde g_{kh}) = 0 \quad \forall \varphi_{kh} \in \Xkh.
\]
Then, using that cG($r$)dG($q$) method is consistent, we have
$$
\begin{aligned}
u_{kh}(\tilde{t},x_0)&=(u_{kh}, \tilde{\delta}_{x_0}\tilde{\theta})=B(u_{kh},\tilde{g})=B(u_{kh},\tilde{g}_{kh})=B(u,\tilde{g}_{kh})\\
&=-\sum_{m=1}^{M}(u,\pa_t\tilde{g}_{kh})_{\ImOm}+(\na  u, \na \tilde{g}_{kh})_{\IOm}-\sum_{m=1}^{M}(u_m,[\tilde{g}_{kh}]_m)_{\Om},
\end{aligned}
$$
where  in the sum with jumps we included the last term by setting $\tilde{g}_{kh,M+1} = 0$ and defining
consequently $[\tilde{g}_{kh}]_M = -\tilde{g}_{kh,M}$.
Similarly to the estimates of $J_1$, $J_2$, $J_3$ above,
using the stability of the Ritz projection in $L^\infty$ norm on polyhedral domains, see Lemma~\ref{lemma: stablity of Ritz}, we have
$$
\begin{aligned}
u_{kh}(\tilde{t},x_0)&=-\sum_{m=1}^{M}(u,\pa_t\tilde{g}_{kh})_{\IOm}+(\na  u, \na \tilde{g}_{kh})_{\IOm}-\sum_{m=1}^{M}(u_m,[\tilde{g}_{kh}]_m)_{\Om}\\
&\le C\lh\|u\|_{L^\infty(\IOm)}\left(\sum_{m=1}^{M}\|\pa_t\tilde{g}_{kh}\|_{L^1(I_m; L^1(\Om))}+\|\Delta_h\tilde{g}_{kh}\|_{L^{1}(I; L^1(\Om))}+
\sum_{m=1}^{M}\|[\tilde{g}_{kh}]_m\|_{L^1(\Om)}\right).
\end{aligned}
$$
Using the discrete maximal parabolic regularity result from Lemma \ref{lemma: fully discrete_maximal_parabolic} with $\vertiii{\cdot}=\|\cdot\|_{L^1(\Om)}$ and $M_h \le C$, cf. Remark~\ref{remark:M_h}, we obtain
\[
u_{kh}(\tilde{t},x_0) \le C\lk\lh\|u\|_{L^\infty(\IOm)}\|P_h\tilde{\delta}_{x_0}\|_{L^1(\Om)}\|\tilde{\theta}\|_{L^1(I_M)}\le C\lk\lh\|u\|_{L^\infty(\IOm)}.
\]
As in the first case this implies
\[
\abs{u(\tilde t,x_0)-u_{kh}(\tilde t,x_0)} \le C \lk \lh \inf_{\chi \in \Xkh} \norm{u-\chi}_{L^\infty(I\times \Om)}.
\]
This completes the proof of the theorem.
\section{Proof of Theorem \ref{thm:local_best_approx}}\label{sec: proofs local results}

To obtain the interior estimate we introduce a smooth cut-off function $\omega$ with the properties that
\begin{subequations} \label{def: properties of omega a local}
\begin{align}
\omega(x)&\equiv 1,\quad x\in B_d \label{eq: property 1 of omega_a local}\\
\omega(x)&\equiv 0,\quad x\in \Om\setminus B_{2d} \label{eq: property 2 of omega_a local}\\
 |\na \omega|&\le Cd^{-1}, \quad |\na^2 \omega|\le Cd^{-2}, \label{eq: property 3 of omega_a local}
\end{align}
\end{subequations}
where $B_d=B_d(x_0)$ is a ball of radius $d$ centered at $x_0$.

As in the proof of Theorem~\ref{thm:global_best_approx} we consider two cases: $\tilde t = T$ and $t_{M-1} < \tilde t < T$. In the first case we obtain
\begin{equation}\label{eq: local starting expression}
u_{kh}(T,x_0)=B(u_{kh}, g_{kh}) =B(u, g_{kh})=B(\om u, g_{kh})+B((1-\om)u, g_{kh}),
\end{equation}
where $g$ is the solution of~\eqref{eq: heat with dirac initial}  and $g_{kh}\in \Xkh$ is the solution of~\eqref{eq: discrete heat with dirac initial}. The first term can be estimated using the global result from Theorem~\ref{thm:global_best_approx}. To this end we introduce $\tilde u=\omega u$ and the cG($r$)dG($q$) solution $\tilde u_{kh} \in \Xkh$ defined by
\[
B(\tilde u_{kh} - \tilde u, \varphi_{kh}) = 0 \quad \text{for all }  \varphi_{kh} \in \Xkh.
\]
There holds
\[
B(\tilde u, g_{kh}) = B(\tilde u_{kh}, g_{kh}) = \tilde u_{kh}(T,x_0)
\le   C \lk \lh  \norm{\tilde{u}}_{L^\infty(I\times \Om)}\le C \lk \lh  \norm{u}_{L^\infty(I\times B_{2d})}.
\]
This results in
\begin{equation}\label{eq:local_after_first_step}
|u_{kh}(T,x_0)| \le C \lk \lh  \norm{u}_{L^\infty(I\times B_{2d})} + B((1-\om)u, g_{kh}).
\end{equation}
It remains to estimate the term $B((1-\om)u, g_{kh})$. Using the dual expression~\eqref{eq:B_dual} of the bilinear form $B$ we obtain
\begin{equation}\label{eq:J1_J2_local}
\begin{aligned}
B((1-\om)u, g_{kh}) &=-\sum_{m=1}^{M}((1-\om)u,\pa_t g_{kh})_{\ImOm}+(\na((1-\om)u), \na g_{kh})_{\IOm}
\\&- \sum_{m=1}^{M-1}((1-\om)u_m, [g_{kh}]_m)_{\Om} +((1-\om)u(T), g_{kh,M}^-)_{\Om}\\
&=-\sum_{m=1}^{M}(\si^{-\frac{N}{2}}(1-\om)u,\si^{\frac{N}{2}}\pa_t g_{kh})_{\ImOm}+(\na((1-\om)u), \na g_{kh})_{\IOm}
\\&- \sum_{m=1}^{M-1}(\si^{-\frac{N}{2}}(1-\om)u_m, \si^{\frac{N}{2}}[g_{kh}]_m)_{\Om} +(\si^{-\frac{N}{2}}(1-\om)u(T), \si^{\frac{N}{2}}g_{kh,M}^-)_{\Om}\\
&= J_1 + J_2+J_3+J_4.
\end{aligned}
\end{equation}
For $J_1$, using that $\si^{-\frac{N}{2}}\le Cd^{-\frac{N}{2}}$ on $\supp (1-\omega) \subset \Om\setminus B_d$ and $(1-\om)\le 1$,
we obtain
\begin{equation}\label{eq:J1_local}
\begin{aligned}
J_1 & \le \norm{\si^{-\frac{N}{2}}(1-\om)u}_{L^\infty(I;L^2(\Om))}\sum_{m=1}^M\|\si^{\frac{N}{2}}\pa_t g_{kh}\|_{L^1(I_m; L^2(\Om))}\\
&\le Cd^{-\frac{N}{2}}\|u\|_{L^\infty(I;L^2(\Om))}\sum_{m=1}^M\|\si^{\frac{N}{2}}\pa_t g_{kh}\|_{L^1(I_m; L^2(\Om))}.
\end{aligned}
\end{equation}
To estimate $J_2$, we define $\psi = (1-\om)u$ and proceed using the Ritz projection $R_h$ defined by~\eqref{eq:Ritz_proj}. There holds
\[
\begin{aligned}
(\na\psi(t), \na g_{kh}(t))_{\Om}&=(\na R_h\psi(t), \na g_{kh}(t))_{\Om}=-(R_h\psi(t), \De_h g_{kh}(t))_{\Om}\\
&=-(R_h\psi(t), \De_h g_{kh}(t))_{B_{d/2}}-(R_h\psi(t), \De_h g_{kh}(t))_{\Om\setminus B_{d/2}}\\
&\le \|R_h\psi(t)\|_{L^\infty(B_{d/2})} \|\De_h g_{kh}(t)\|_{L^1(B_{d/2})}\\
&\qquad\qquad\qquad\qquad+Cd^{-\frac{N}{2}}\|R_h\psi(t)\|_{L^2(\Om\setminus B_{d/2})}\|\si^{\frac{N}{2}}\De_h g_{kh}(t)\|_{L^2(\Om\setminus B_{d/2})}\\
&\le \|R_h\psi(t)\|_{L^\infty(B_{d/2})} \|\De_h g_{kh}(t)\|_{L^1(\Omega)}+Cd^{-\frac{N}{2}}\|R_h\psi(t)\|_{L^2(\Om)}\|\si^{\frac{N}{2}}\De_h g_{kh}(t)\|_{L^2(\Om)},
\end{aligned}
\]
where we used $\si^{-\frac{N}{2}}\le Cd^{-\frac{N}{2}}$ on $\Om\setminus B_{d/2}$.
In the interior pointwise error estimates ~\cite[Thm. 1.1]{AHSchatz_LBWahlbin_1995a} with $F\equiv 0$, choosing $\chi=0$, $s=0$, $q=2$ and using the triangle inequality and the fact that $\supp \psi(t) \subset \Omega \setminus B_d$, we have
$$
\|R_h\psi(t)\|_{L^\infty(B_{d/2})} \le C\lh\|\psi(t)\|_{L^\infty(B_{d})}+Cd^{-\frac{N}{2}}\|R_h\psi(t)\|_{L^2(\Omega)}=Cd^{-\frac{N}{2}}\|R_h\psi(t)\|_{L^2(\Omega)}.
$$
Using a standard elliptic estimate and recalling $\psi = (1-\om) u$ we have
\[
\begin{aligned}
\|R_h \psi(t)\|_{L^2(\Om)} &\le \|\psi(t)\|_{L^2(\Om)} + \|\psi(t)-R_h\psi(t)\|_{L^2(\Om)}\\
&\le  \|\psi(t)\|_{L^2(\Om)} + c h \|\nabla \psi(t)\|_{L^2(\Om)}\\
&\le \|u(t)\|_{L^2(\Om)} + c h \|(1-\om)\nabla u(t) - \nabla \om u(t)\|_{L^2(\Om)}\\
&\le c \|u(t)\|_{L^2(\Om)} + ch \|\nabla u(t)\|_{L^2(\Om)},
\end{aligned}
\]
where in the last step we used $\abs{\nabla \om} \le C d^{-1} \le C h^{-1}$.

Therefore we obtain
\[
(\na\psi(t), \na g_{kh}(t))_{\Om} \le C d^{-\frac{N}{2}} \left( \|u(t)\|_{L^2(\Om)} + ch \|\nabla u(t)\|_{L^2(\Om)} \right) \left( \|\De_h g_{kh}(t)\|_{L^1(\Omega)} + \|\si^{\frac{N}{2}}\De_h g_{kh}(t)\|_{L^2(\Om)}\right).
\]
This results in
\begin{equation}
J_2 \le C d^{-\frac{N}{2}} \left( \|u\|_{L^\infty(I;L^2(\Om))} + ch \|\nabla u\|_{L^\infty(I;L^2(\Om))} \right) \left( \|\De_h g_{kh}\|_{L^1(I;L^1(\Omega))} + \|\si^{\frac{N}{2}}\De_h g_{kh}\|_{L^1(I;L^2(\Om))}\right).
\end{equation}
For $J_3$, similarly to $J_1$ we obtain
\begin{equation}\label{eq:J3_local}
\begin{aligned}
J_3&\le \|\si^{-\frac{N}{2}}(1-\omega)u\|_{L^\infty(I;L^2(\Om))}\sum_{m=1}^{M-1}\|\si^{\frac{N}{2}} [g_{kh}]_m\|_{L^2(\Om)}\\
&\le Cd^{-\frac{N}{2}}\|u\|_{L^\infty(I;L^2(\Om))}\sum_{m=1}^{M-1}\|\si^{\frac{N}{2}} [g_{kh}]_m\|_{L^2(\Om)}.
\end{aligned}
\end{equation}
Finally,
\begin{equation}
J_4\le Cd^{-\frac{N}{2}}\|u\|_{L^\infty(I;L^2(\Om))}\|\si^{\frac{N}{2}}g_{kh,M}^-\|_{L^2(\Om)}.
\end{equation}
Combining the estimates for $J_1$, $J_2$, $J_3$, and $J_4$,  we have
\begin{multline*}
B((1-\om)v, g_{kh})\le C d^{-\frac{N}{2}} \left( \|u\|_{L^\infty(I;L^2(\Om))} + ch \|\nabla u\|_{L^\infty(I;L^2(\Om))} \right) \\
\times \Biggl(\sum_{m=1}^M\|\si^{\frac{N}{2}}\pa_t g_{kh}\|_{L^1(I_m; L^2(\Om))}+ \|\De_h g_{kh}\|_{L^1(I;L^1(\Omega))} + \|\si^{\frac{N}{2}}\De_h g_{kh}\|_{L^1(I;L^2(\Om))}\\ + \sum_{m=1}^{M-1}\|\si^{\frac{N}{2}} [g_{kh}]_m\|_{L^2(\Om)} + \|\si^{\frac{N}{2}}g_{kh,M}^-\|_{L^2(\Om)}\Biggr).
\end{multline*}
For the term $\|\De_h g_{kh}\|_{L^1(I;L^1(\Omega))}$ we apply Lemma~\ref{lemma: smoothing L1 in time}  with $\vertiii{\cdot}=\|\cdot\|_{L^1(\Om)}$ and $M_h \le C$ and for all weighted terms with $\vertiii{\cdot}=\|\si^{\frac{N}{2}}(\cdot)\|_{L^2(\Om)}$ and $M_h \le C \lh$, cf. Remark~\ref{remark:M_h}, resulting in
\[
\begin{aligned}
B((1-\om)v, g_{kh}) &\le C d^{-\frac{N}{2}} \lk \lh \left(\norm{u}_{L^\infty(I;L^2(\Om))} + h \norm{\nabla u}_{L^\infty(I;L^2(\Om))} \right)\left(\norm{P_h \tilde \delta}_{L^1(\Omega)} + \norm{\sigma^{\frac{N}{2}} P_h \tilde \delta}_{L^2(\Omega)}\right)\\
& \le  C d^{-\frac{N}{2}} \lk \lh \left(\norm{u}_{L^\infty(I;L^2(\Om))} + h \norm{\nabla u}_{L^\infty(I;L^2(\Om))} \right),
\end{aligned}
\]
where in the last step we again used the stability of the $L^2$ projection with respect to the $L^1$ norm, the fact that $\norm{\tilde \delta}_{L^1(\Omega)}\le C$, and Lemma~\ref{lemma:sigma_delta} for the term $\norm{\sigma^{\frac{N}{2}} P_h \tilde \delta}_{L^2(\Omega)}$.
Inserting this inequality into~\eqref{eq:local_after_first_step}, we obtain
$$
|u_{kh}(T,x_0)| \le C \lk \lh \left(
\|u\|_{L^\infty(I\times B_{2d})}+ d^{-\frac{N}{2}} \left(\norm{u}_{L^\infty(I;L^2(\Om))} + h \norm{\nabla u}_{L^\infty(I;L^2(\Om))} \right)\right).
$$
Using that the cG($r$)dG($q$) method is invariant on $\Xkh$, by replacing $u$ and $u_{kh}$ with $u-\chi$ and $u_{kh}-\chi$ for any $\chi\in \Xkh$, we obtain Theorem~\ref{thm:local_best_approx} for the case $\tilde t = T$.

In the case $t_{M-1} < \tilde t < T$ we proceed as in the proof of Theorem~\ref{thm:global_best_approx} using the dual problem~\eqref{eq: heat with dirac on the RHS} instead of~\eqref{eq: heat with dirac initial}. Then, we proceed as in the above proof using in the last step the discrete maximal parabolic regularity from Lemma~\ref{lemma: fully discrete_maximal_parabolic} instead of Lemma~\ref{lemma: smoothing L1 in time}. This completes the proof.


\bibliography{lit_Linf}
\bibliographystyle{siam}

\end{document}